\documentclass[10pt,a4paper,reqno]{amsart}

\usepackage{lmodern}
\usepackage[T1]{fontenc}
\usepackage[utf8]{inputenc}
\usepackage[english]{babel}
\usepackage{microtype} 
\usepackage[yyyymmdd,hhmmss]{datetime}
\usepackage{float}

\usepackage{slashed,url,bm,amssymb,mathrsfs,mathtools,todonotes,xparse,booktabs,graphicx}
\usepackage[centertableaux]{ytableau}
\makeatletter
\edef\boxframe@normal@YT{\boxframe@YT}
\usepackage[pdftex]{hyperref}
\usepackage{float,placeins}

\usepackage{blindtext} 
\usepackage{epigraph} 

\usepackage{array}
\usepackage[most]{tcolorbox}

\usepackage{amsmath,amssymb,amsthm}
\usepackage{graphicx}
\usepackage[margin=1in]{geometry}
\usepackage{xcolor}
\usepackage{tikz}
\usepackage{mathrsfs}
\usetikzlibrary{decorations.pathreplacing}
\usepackage{mathscinet}

\newtheorem{theorem}{Theorem}
\newtheorem{lemma}{Lemma}
\newtheorem{proposition}{Proposition}
\newtheorem{conjecture}{Conjecture}
\newtheorem{corollary}{Corollary}
\theoremstyle{definition}
\newtheorem{definition}{Definition}

\newtheorem{example}{Example}

\newcommand{\al}{\alpha}
\newcommand{\J}{J}

\newcommand{\Js}{J^{\#}}
\newcommand{\arm}{\operatorname{arm}}
\newcommand{\leg}{\operatorname{leg}}
\newcommand{\QQ}{\mathbb{Q}}
\newcommand{\ZZ}{\mathbb{Z}}
\newcommand{\Lam}{\Lambda}
\newcommand{\sh}{\operatorname{sh}}
\newcommand{\ellp}{\ell}
\newcommand{\normJ}{{j}}

\newcommand{\Lamf}{\Lambda_{(q,t)}}
\newcommand{\Laml}{\Lambda_{[q^{\pm},t^{\pm}]}}
\newcommand{\Lamsf}{\Lambda^*_{(q,t)}}
\newcommand{\Lamsl}{\Lambda^*_{[q^{\pm},t^{\pm}]}}
\newcommand{\Lamfa}{\Lam_{(\al)}}
\newcommand{\LamP}{\Lam_{[\al]}}
\newcommand{\LamA}{\Lam_{[\al^{\pm}]}}
\newcommand{\LamsA}{\Lam^*_{[\al^{\pm}]}}
\newcommand{\Lamsfa}{\Lam^*_{(\al)}}
\definecolor{boxU}{RGB}{80,80,255}
\definecolor{boxD}{RGB}{255,80,80}

\title{Congruences of shifted Jack Littlewood--Richardson coefficients}
\author[R. Mickler]{Ryan Mickler}
\address{Singulariti Research, 55 University Street, Carlton, 3053, Australia}
\email{ry.mickler@gmail.com}

\begin{document}
\maketitle

\begin{abstract}
The shifted Jack Littlewood--Richardson coefficients $g^\lambda_{\mu\nu}$, first studied by Alexandersson-F\'eray, are Laurent polynomials in the Jack parameter $\al$ attached to
triples of partitions, which generalize the classical Jack Littlewood--Richardson coefficients investigated by Stanley, et al.  In a previous work of the author's, it was conjectured that the Littlewood--Richardson coefficients for two triples, in which one of the partitions differ by a single box move, are congruent modulo the $\alpha$-hook length of the pivot box for that move.  In this note we prove that conjecture. We also investigate the extension of that conjecture to shifted Macdonald functions, which remains open pending two properies of Lassalle's shift map in that case.

\end{abstract}

\subsection*{Introduction}

Consider two partitions $\lambda,\tilde\lambda$ of the same size which are 
related by a single box move, i.e. moving one removable box from a row $i$ in $\lambda$
and adding it to a row $k>i$.  We write
$\lambda\sim_p\tilde\lambda$ for this move, where the \emph{pivot} box is
$p:=(i,\lambda_k+1)$, i.e. the box which is in the row of the removed box and the column of the added box. The two partitions have a shared $\alpha$-hook length, $h$, obtained by
evaluating the lower hook of $\lambda$ and the upper hook of
$\tilde\lambda$ both at $p$:
\begin{equation}\label{eq:shared-hook}
h:=h_\lambda(p)=h'_{\tilde\lambda}(p)
=\al(\lambda_i-\lambda_k-1)+(k-i).
\end{equation}
It suffices to consider only the box moves from row $i$ to a strictly lower row $k>i$ (but the results hold in both directions), where the constant
term $k-i\ge1$ is always nonzero. Furthermore $\lambda_i>\lambda_k+1$. In either case, $h=\alpha x+y$, with $x,y\ge1$. Note the shared hook is never proportional to $\al$.  We say that a pivot $p$ of a
triple $(\mu,\nu,\lambda)\sim_p(\tilde \mu,\tilde\nu,\tilde\lambda)$ is such a single-box move in exactly one of the three
partitions (usually denoted $p \in \sigma$).

In our previous work \cite{Mickler:2026aa}, we concluded with the following conjecture.
\begin{conjecture}[Jack pivot congruences]\label{conj:mickler2026}
Let
$(\mu,\nu,\lambda)\sim_p(\tilde \mu,\tilde\nu,\tilde\lambda)$ be a one-box pivot about $p$ with common hook $h$. Then the following congruence holds:
\[
g_{\mu\nu}^{\lambda} \equiv g_{\tilde\mu\tilde\nu}^{\tilde\lambda} \pmod{h},
\]
where $g$ is the shifted Jack Littlewood--Richardson coefficient of \cite{Alexandersson:2024aa}.\footnote{In Mickler \cite{Mickler:2026aa} the notation $g_{\mu\nu ;\lambda}$ is used for Alexandersson-F\'eray's \cite{Alexandersson:2017aa} more standard notation $g_{\mu\nu}^{\lambda}$}
\end{conjecture}
Note that for $|\lambda|=|\mu|+|\nu|$ the shifted coefficients $g$ agree with the regular Jack Littlewood-Richardson coefficients, studied by Stanley \cite{Stanley:1989} among many others.
\begin{example}
Consider the partitions $\lambda = 4 3 3 1$, $\tilde \lambda = 4 3 2 2$ which differ by a pivot $p=({3,2})$, at which point we have
$h_{4 3 3 1}^{L}(p)=h_{4 3 2 2}^{U}(p) = {\color{red} (1 + \al)}$. For $\mu = 3 2 1$, and $\nu=2 2 2$, we look up the Jack LR coefficients from the tables in \cite{Alexandersson:2024aa},
\begin{align*}
g_{{3 2 1}, {2 2 2}}^{{4 3 3 1}}=&&\qquad48 \al^4 (2 + \al)^2 (3 + \al)^2 (1 + 2 \al)^2 (1 + 3 \al) (2 + 3 \al)^2 (24 + 171 \al + 284 \al^2 + 116 \al^3),\\
g_{{3 2 1}, {2 2 2}}^{{4 3 2 2}}=
&&\qquad288 \al^5 (2 + \al)^3 (3 + \al)^2 (1 + 2 \al)^2 (2 + 3\al) (3 + 4\al)^2 (2 + 11 \al + 2 \al^2).
\end{align*}
Their difference is 
\[ -48 \al^4 {\color{red} (1 + \al)} (2 + \al)^2 (3 + \al)^2 (1 + 2 \al)^2 (2 + 3 \al) (-48 -294 \al - 157 \al^2 + 480 \al^3 + 492 \al^4 + 192 \al^5)\]
which we see contains the {\color{red}hook} factor. Thus $g_{{3 2 1}, {2 2 2}}^{{4 3 3 1}} \equiv g_{{3 2 1}, {2 2 2}}^{ {4 3 2 2}} \pmod{{\color{red}1+\alpha}}$.
\end{example}

In this article, we prove
Conjecture~\ref{conj:mickler2026} in a self-contained way, without referencing any of the auxiliary constructions found in \cite{Mickler:2026aa}.

\begin{theorem}\label{thm:top-pivots}
Conjecture~\ref{conj:mickler2026} holds.
\end{theorem}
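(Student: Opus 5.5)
We will work with the shifted Jack functions $J^\#_\lambda$ (equivalently the interpolation Jack polynomials) and the characterization of $g^\lambda_{\mu\nu}$ through the expansion
\[
J^\#_\mu J^\#_\nu=\sum_\lambda g^\lambda_{\mu\nu}J^\#_\lambda .
\]
The key tool is the vanishing/evaluation property: $J^\#_\lambda$ evaluated at a partition $\rho$ vanishes unless $\lambda\subseteq\rho$, and equals a product of upper and lower $\alpha$-hooks when $\rho=\lambda$. The plan is to reduce the congruence to a statement about single shifted Jack functions modulo $h$. The statement is
\[
J^\#_\lambda\equiv c\,J^\#_{\tilde\lambda}\pmod h
\]
inside the lattice $\LamsA$ of shifted symmetric functions with coefficients in $\ZZ[\al^{\pm}]$. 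Here $c$ should be a unit, and after the normalization used for $g$ we expect $c=1$. Since $h=\al x+y$ with $x,y\ge1$ is never proportional to $\al$, reduction modulo $h$ makes sense in $\ZZ[\al^{\pm}]/(h)$ without killing $\al$.

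The steps are as follows.

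\textbf{(1)} Show that $J^\#_\lambda-J^\#_{\tilde\lambda}\in h\,\LamsA$. We do this by expanding both in a $\ZZ[\al]$-integral basis, for instance the shifted power sums or monomials, through the combinatorial formula for $J^\#$ as a sum over reverse tableaux with weights built from arm/leg data. The two tableau sums will be compared by an explicit bijection that moves the box along the pivot. Fillings that do not interact with the pivot box $p$ should contribute identical weights. The remaining fillings carry a factor whose difference is a multiple of the shared hook $h_\lambda(p)=h'_{\tilde\lambda}(p)$.

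\textbf{(2)} Handle the cases where the pivot lies in $\mu$ or in $\nu$. By symmetry we may take it in $\mu$. Multiply the congruence of step (1) by $J^\#_\nu$ and expand in the basis $\{J^\#_\lambda\}$. This requires that the structure constants be integral in $\ZZ[\al^{\pm}]$, or at least that no denominator divisible by $h$ appears. Comparing coefficients then gives $g^\lambda_{\mu\nu}\equiv g^{\lambda}_{\tilde\mu\nu}\pmod h$.

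\textbf{(3)} Handle the case where the pivot lies in $\lambda$. Here we dualize: $g^\lambda_{\mu\nu}$ is extracted by evaluating at $\lambda$ and using triangularity. This gives the recursion
\[
g^\lambda_{\mu\nu}=\frac{1}{J^\#_\lambda(\lambda)}\Bigl(J^\#_\mu(\lambda)J^\#_\nu(\lambda)-\sum_{\rho\subsetneq\lambda}g^\rho_{\mu\nu}J^\#_\rho(\lambda)\Bigr).
\]
Alternatively we can use the skew/coproduct description $g^\lambda_{\mu\nu}=\langle \Delta J^\#_\lambda ,\dots\rangle$, so that the congruence of step (1) applies directly to $J^\#_\lambda$ through a coproduct with integral structure. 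We prefer the coproduct route, since it avoids dividing by $J^\#_\lambda(\lambda)$, which contains $h$ itself.

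\textbf{Main obstacle.} The hardest step is (1): establishing the congruence for the single functions $J^\#_\lambda$ with the correct normalization, and making sure all coefficients involved are honestly integral. If the direct tableau comparison is unwieldy, we will try a different route. Characterize $J^\#_\lambda \bmod h$ by its vanishing conditions at all $\rho$ with $|\rho|\le|\lambda|$. Then check that modulo $h$ the partitions $\lambda$ and $\tilde\lambda$ impose identical conditions. The point is that the eigenvalue/content data of $\lambda$ and $\tilde\lambda$ differ exactly by $\pm h$ at the moved box, so the two evaluation points coincide modulo $h$.
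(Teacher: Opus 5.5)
Your Step (1) is true (with $c=1$), but nothing in the proposal proves it, and it carries the whole argument. The tableau route is only a hope. The weights in the reverse-tableau formula for $\Js_\lambda$ are rational in $\al$ and depend on the whole shape, and you give no mechanism that keeps a box-moving comparison inside $\QQ[\al^{\pm}]$.

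The interpolation route fails outright. Since $h=h_\lambda(p)$ divides $H_\lambda$ and $h=h'_{\tilde\lambda}(p)$ divides $H'_{\tilde\lambda}$, both $\Js_\lambda$ and $\Js_{\tilde\lambda}$ vanish modulo $h$ at \emph{every} partition of size at most $|\lambda|$. The interpolation problem is therefore degenerate mod $h$, and vanishing conditions cannot show the two functions are congruent.

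Your closing observation, that the coordinates $\al\lambda_i-i$ agree mod $h$ after swapping the two rows, is exactly Lemma~\ref{lem:shifted-coordinate-pivot}. It has to be used dually: evaluation at $\lambda$ and at $\tilde\lambda$ agree modulo $h$ on all of $\LamsA$ (Lemma~\ref{lem:regular-pivot}). It must then be combined with an ingredient your plan never mentions: Lassalle's $\sh$ is an isomorphism of Laurent lattices $\LamA\to\LamsA$ (Knop--Sahi plus unitriangularity, Corollary~\ref{cor:jack-laurent-iso}). Apply Lemma~\ref{lem:regular-pivot} to $\sh(p_\rho)$ and use $\sh(p_\rho)(\lambda)=\al^{-n}\langle p_\rho,J_\lambda\rangle_\al$ for $|\rho|=n=|\lambda|$. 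This gives $[p_\rho]J_\lambda\equiv[p_\rho]J_{\tilde\lambda}\pmod h$, and applying $\sh$ yields your Step (1).

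Steps (2)--(3) rest on a misnormalization and a false integrality assumption. The coefficients of $\Js_\mu\Js_\nu$ in the $\Js_\lambda$ basis are $d^\lambda_{\mu\nu}$, whereas the congruence concerns $g^\lambda_{\mu\nu}=H_\lambda H'_\lambda d^\lambda_{\mu\nu}$. The $d$'s are not Laurent and can have $h$ itself in the denominator, so both sufficient conditions you name fail. For example, take the input pivot $(2)\sim(1,1)$ with $h=\al+1$, $\nu=(2)$, $\lambda=(3,1)$. Then $d^{31}_{2,2}=\frac{4\al}{(\al+1)(3\al+1)}$ and $d^{31}_{11,2}=\frac{1}{\al+1}$, whose difference $\frac{\al-1}{(\al+1)(3\al+1)}$ has $h$ in its denominator. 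By contrast, $g^{31}_{2,2}-g^{31}_{11,2}=4\al^3(\al+1)(\al-1)$.

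So ``multiply Step (1) by $\Js_\nu$ and compare coefficients'' only works after multiplying by $H_\rho H'_\rho$. You then need $G\mapsto H_\rho H'_\rho\,[\Js_\rho]G=\langle\sh^{-1}G,J_\rho\rangle_\al$ to send $\LamsA$ into $\QQ[\al^{\pm}]$, which is again the Laurent-lattice property of $\sh$. The ``integral coproduct'' of Step (3) is unspecified; the correct dual object is the $\al$-pairing transported by $\sh$.

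With those two ingredients your architecture does close, and uniformly. Write $g^\lambda_{\mu\nu}=\langle L_\mu(J_\nu),J_\lambda\rangle_\al$ with $L_\mu=\sh^{-1}\circ(\Js_\mu\,\cdot)\circ\sh$, which preserves $\LamA$. Then $J_\nu\equiv J_{\tilde\nu}$ gives the input congruence and $J_\lambda\equiv J_{\tilde\lambda}$ gives the output congruence, without the separate $\Phi_{\mu\nu}$ and $\Psi_{\lambda/\mu}$ constructions. As written, though, both load-bearing facts are absent, and the steps you describe would not go through.
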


The proof is outlined as follows.  In the scaled shifted
coordinates $z_i(\lambda)=\al\lambda_i-i$, a pivot $\lambda\sim_p\tilde\lambda$
simply transposes the two affected coordinates and shifts them by $\pm h$
(Lemma~\ref{lem:shifted-coordinate-pivot}). Consequently the two
shifted-coordinate multisets agree modulo $h$.  For any $\al$-shifted symmetric
function $F$ with coefficients in the Laurent ring $\QQ[\al^\pm]$ we then have $F(\lambda)\equiv F(\tilde\lambda)\pmod h$
(Lemma~\ref{lem:regular-pivot}).
The congruence for $g^\lambda_{\mu\nu}$ follows by realizing it as
(proportional to) the evaluation of a shifted symmetric function. To build such functions we rely on Lassalle's shifted
transform ($\sh$) of \cite{Lassalle:2008aa}, which ensures that the relevant coefficients are Laurent and not rational in $\alpha$. E.g. for $|\lambda|=|\mu|+|\nu|$, we have $g_{\mu\nu}^{\lambda} = \sh(\alpha^{|\lambda|} J_\mu J_\nu)(\lambda)$.

The previous work also claimed an extension to the case of shifted Macdonald functions\footnote{The statement of the conjecture here has a corrected prefactor power of $t$ compared to the referenced work.}. Let $
n(\lambda)=\sum_i(i-1)\lambda_i.$

\begin{conjecture}[Macdonald pivot congruences]\label{conj:macdonald-pivots} 
Let
$(\mu,\nu,\lambda)\sim_p(\tilde \mu,\tilde\nu,\tilde\lambda)$ be a one-box pivot $p\in \sigma \cap \tilde \sigma$ with common hook $h$.  Then we have the following congruence of shifted Macdonald Littlewood--Richardson coefficients,
\[
t^{-n(\sigma)} g_{\mu\nu}^{\lambda}(q,t)
\equiv
t^{-n(\tilde\sigma)}  g_{\tilde\mu\tilde\nu}^{\tilde\lambda}(q,t)
\pmod h.
\]
\end{conjecture}

In the final section of this paper we show that a proof of
Conjecture~\ref{conj:macdonald-pivots} would follow nearly identically to the proof in the Jack case,
contingent on two unproven structural properties (Conjectures~\ref{conj:mac-laurent-iso} and~\ref{conj:mac-skew-laurent}) of Lassalle's shift map
for $(q,t)$-shifted symmetric functions, as recently investigated by Ben Dali--D'Adderio
\cite{Dali:2026aa}.

\section{Congruences}\label{sec:congruence}

\subsection{Definitions}

We use definitions and notation from Macdonald \cite{Macdonald:1995}.  Throughout, rows and columns are indexed from $1$.  For a box $s=(i,j)$ in a partition $\lambda$, write
\[
\arm_\lambda(s)=\lambda_i-j,
\qquad
\leg_\lambda(s)=\lambda'_j-i.
\]
The \emph{lower} and \emph{upper} $\alpha$-hook lengths of a box
$s$ in a partition $\lambda$ are,
\[
h_\lambda(s)=\al\,\arm_\lambda(s)+\leg_\lambda(s)+1,
\qquad
h'_\lambda(s)=\al\bigl(\arm_\lambda(s)+1\bigr)+\leg_\lambda(s).
\]
The two hook products are
\[
H_\lambda=\prod_{s\in\lambda} h_\lambda(s),
\qquad
H'_\lambda=\prod_{s\in\lambda}h'_\lambda(s).
\]

Let $\Lam$ be the ring of ordinary symmetric functions in variables $x_i$. For an indeterminate $\alpha$, We let $\Lam_{[\alpha]} = \Lam \otimes \QQ[\alpha]$, and $\Lam_{[\alpha^\pm]}:=\Lam \otimes \QQ[\alpha^\pm]$ denote symmetric function with Laurent coefficients in $\alpha$. 
We equip $\Lam_{[\alpha^\pm]}$ with the usual scalar product
determined in the power sum basis as
\begin{equation}\label{def:metricalpha}
\langle p_\rho,p_\sigma\rangle_\al=\delta_{\rho\sigma}z_\rho\al^{\ellp(\rho)}.
\end{equation}

The integral Jack symmetric functions $J_\lambda$, each defined for a partition $\lambda$ (Macdonald \cite[Ch.~VI]{Macdonald:1995}), are the unique symmetric functions with the following properties
\begin{enumerate}
\item (orthogonality) $\langle\J_\lambda,\J_\mu\rangle_\al = \delta_{\lambda\mu} j_\lambda$, where $j_\lambda := \langle\J_\lambda,\J_\lambda\rangle_\al = H_\lambda H'_\lambda$.
\item (triangularity) $\J_\mu = \sum_{\nu \le_d\, \mu} a_{\mu\nu} m_\nu$. 
\item (normalization) $[m_\mu]\J_\mu = 1$, which can be shown to force $a_{\mu\nu} \in \ZZ[\alpha]$ hence $J_\lambda \in \LamP$.
\end{enumerate}

For $\mu \subseteq \lambda$ the skew Jack function $J_{\lambda/\mu}$ is defined by adjunction, 
\begin{equation}\label{eq:shift-jack_adj}
\langle J_{\lambda/\mu}, J_\nu \rangle_\alpha = \langle J_\lambda, \J_\mu J_\nu \rangle_\alpha.
\end{equation}
By
\cite[Ch.~VI, \S~10]{Macdonald:1995}, we have $J_{\lambda/\mu} \in\LamA$.

The (integral) Jack Littlewood--Richardson coefficients $c_{\mu\nu}^{\lambda} \in \QQ(\alpha)$ are defined by
\[ J_\mu J_\nu = \sum_{\lambda: |\lambda|=|\mu|+|\nu|} c_{\mu\nu}^{\lambda } J_\lambda. \]

\subsubsection{Shifted Symmetric functions}

Let $\LamsA$ be the ring of $\al$-shifted
symmetric functions with Laurent coefficients following Lassalle {\cite[Section~3]{Lassalle:2008aa}},
\begin{equation}\label{def:shifted-symm}
\LamsA=\bigl\{\,f\in\QQ[\al^\pm][x_1,x_2\dots]\ :\
f\text{ symmetric in } z_i:=\al x_i-i \bigr\}.
\end{equation}
We let $\Lamsfa = \LamsA \otimes \QQ(\alpha)$ to be shifted-symmetric functions with rational coefficients in $\alpha$.
For example consider the two functions,
\[ f_1  = \frac{1}{\alpha} \sum_i (z_i)^2 \in \LamsA , \qquad f_2 = \frac{1}{\alpha+1} \sum_i (z_i)^2.  \in \Lamsfa\]
Only $f_1 \in \LamsA$ is a shifted symmetric function, as the coefficients are genuinely Laurent. 

Let $\Js_\mu$ denote the integral shifted Jack function in the normalization of
\cite{Okounkov:1996,Knop:1996}, defined as follows: 
For each partition $\mu$, $\Js_\mu$ is the unique $\alpha$-shifted symmetric function
of degree at most $|\mu|$
characterized by the interpolation conditions \cite[Section~3]{Alexandersson:2024aa}
\begin{equation}\label{eq:shifted-jack-interpolation}
\Js_\mu(\lambda)=
\begin{cases}
\al^{-|\mu|}H_\mu H'_\mu, & \lambda=\mu,\\
0, & |\lambda|\le |\mu|,\ \lambda\ne\mu.
\end{cases}
\end{equation}
The top homogeneous component of $\Js_\mu$ is equal to the ordinary
Jack polynomial $J_\mu$ in the corresponding normalization.
Lasalle \cite[Section~3]{Lassalle:2008aa} showed that integral shifted Jack functions have Laurent-polynomial coefficients,
$\Js_\lambda\in\LamsA$. Knop-Sahi \cite{Knop:1996} prove the higher vanishing condition that $\Js_\mu(\lambda)= 0$ unless $\mu\subseteq\lambda$.

The shifted Jack Littlewood--Richardson coefficients $d^\lambda_{\mu\nu}\in \QQ(\alpha)$ are defined by \cite[Section~4]{Alexandersson:2024aa}
\[
\Js_\mu\Js_\nu=\sum_{\lambda:|\lambda|\le|\mu|+|\nu|} d^\lambda_{\mu\nu}\Js_\lambda.
\]
The normalized coefficient appearing in the congruence conjectures is
\[
g^\lambda_{\mu\nu}:=H_\lambda H'_\lambda d^\lambda_{\mu\nu}.\]
This normalization clears the rational
denominators of $d^\lambda_{\mu\nu}$, and Alexandersson--F\'eray \cite[Prop. 2]{Alexandersson:2024aa} show that the $g^\lambda_{\mu\nu}$ are
Laurent polynomials in $\al$,
\[
\alpha^{|\mu|+|\nu|-|\lambda|-2} g^\lambda_{\mu\nu}\in\QQ[\al],
\]
and that negative powers of $\al$ can occur in $g$. For example
\[ g^{2111}_{2111,1111} = \alpha^{-1}
144(\alpha + 1)^2(\alpha + 2)^2(\alpha + 4)(2\alpha + 3)^2. \]
When $|\lambda|=|\mu|+|\nu|$, we have $g^\lambda_{\mu\nu} = \langle\J_\mu\J_\nu,\J_\lambda\rangle_\al \in \ZZ[\alpha]$.

\begin{figure}[ht]
\centering
\begin{ytableau}
*(boxU) & & \\
*(boxU) & &         & *(boxU) & \\
        & & *(boxD) &         & *(boxD) \\
*(boxU) & &         & *(boxU) &         & *(boxU) & & & \\
        & & *(boxD) &         & *(boxD) &         & & & *(boxD)\\
*(boxU) & & *(boxD) & *(boxU) & *(boxD) & *(boxU) & & & *(boxD) & *(boxU) & 
\end{ytableau}
\caption{
For any partition, the possible pivot boxes $p$ are given by the set of (non outer corner) boxes that either share a row with an addable box and a column with a removable box (red), or share a column with an addable box and a row with a removable box (blue).}
\label{fig:possible-pivots}
\end{figure}

%

\subsection{Shifted coordinate congruence}

The shifted variables $z_i=\al x_i-i$ are evaluated at a partition $\lambda = (\lambda_1, \lambda_2, \ldots)$ as
\[
z_i(\lambda)=\al\lambda_i-i.
\]
This present work begins with the following observation about the relationship between pivots and shifted coordinates.
\begin{lemma}\label{lem:shifted-coordinate-pivot}
If $\lambda \sim_p \tilde\lambda$ via a pivot at $p$ with common hook $h$, removing a box from row
$a$ and adding a box to row $b>a$, then
\[
\{z_i(\lambda)\}_{i\ge1}\equiv \{z_i(\tilde\lambda)\}_{i\ge1}\pmod h
\]
as multisets, after the interchange of the two elements indexing the affected rows
$a\leftrightarrow b$.
\end{lemma}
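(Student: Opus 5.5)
This is a direct computation. Write $\tilde\lambda_a=\lambda_a-1$ and $\tilde\lambda_b=\lambda_b+1$, with $\tilde\lambda_i=\lambda_i$ for all $i\ne a,b$. For every unaffected row $i\notin\{a,b\}$ we have $z_i(\tilde\lambda)=z_i(\lambda)$ exactly. So the claim reduces to comparing the two affected coordinates, matched crosswise $a\leftrightarrow b$. The aim is to show
\[
z_b(\tilde\lambda)=z_a(\lambda)-h,\qquad z_a(\tilde\lambda)=z_b(\lambda)+h,
\]
which gives both congruences modulo $h$ at once.

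First I will pin down the hook. In the introduction's notation (rows $i=a$, $k=b$), the pivot is $p=(a,\lambda_b+1)$, and \eqref{eq:shared-hook} gives
\[
h=\al(\lambda_a-\lambda_b-1)+(b-a).
\]
Strictly speaking this is stated in the introduction, but as a sanity check it follows from the definitions. We have $\arm_\lambda(p)=\lambda_a-\lambda_b-1$. Since the removed box in row $a$ is removable and row $b$ can accept a box, $\lambda'_{\lambda_b+1}=b-1$, so $\leg_\lambda(p)=b-1-a$. This gives $h_\lambda(p)=\al(\lambda_a-\lambda_b-1)+(b-a)$.

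Next I will verify the two identities by computing with $z_i=\al\lambda_i-i$:
\[
z_b(\tilde\lambda)=\al(\lambda_b+1)-b
=(\al\lambda_a-a)-\bigl[\al(\lambda_a-\lambda_b-1)+(b-a)\bigr]=z_a(\lambda)-h,
\]
\[
z_a(\tilde\lambda)=\al(\lambda_a-1)-a
=(\al\lambda_b-b)+\bigl[\al(\lambda_a-\lambda_b-1)+(b-a)\bigr]=z_b(\lambda)+h.
\]
Under the bijection that fixes every $i\notin\{a,b\}$ and swaps $a\leftrightarrow b$, each coordinate of $\tilde\lambda$ is therefore congruent to its partner in $\lambda$ modulo $h$ in $\QQ[\al^\pm]$, which is the multiset congruence.

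There is no real obstacle. The only care needed is in the hook: identifying the pivot column as $\lambda_b+1$ and the leg as $b-1-a$. This uses $\lambda_{b-1}>\lambda_b$ (so that row $b$ is addable) and $\lambda_a-1\ge\lambda_{a+1}$ (so that row $a$ is removable).
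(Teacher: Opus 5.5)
Your proposal is correct and follows the paper's proof exactly: it reads off $\arm_\lambda(p)=\lambda_a-\lambda_b-1$ and $\leg_\lambda(p)=b-a-1$ at $p=(a,\lambda_b+1)$, then checks the crosswise identities $z_a(\lambda)-z_b(\tilde\lambda)=h$ and $z_b(\lambda)-z_a(\tilde\lambda)=-h$, noting that all other rows are unchanged. The only addition is your justification of the leg via $\lambda'_{\lambda_b+1}=b-1$, which uses that $\tilde\lambda$ is a partition (so $\lambda_{b-1}>\lambda_b$); the paper simply asserts the leg.
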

\begin{proof}
By assumption,
\[
\tilde\lambda_a=\lambda_a-1,\qquad
\tilde\lambda_b=\lambda_b+1,
\]
and all other parts are unchanged.  The pivot column is $j=\lambda_b+1$.  At the
pivot box $p=(a,j)$ in $\lambda$, we have
\[
\arm_\lambda(p)=\lambda_a-\lambda_b-1,
\qquad
\leg_\lambda(p)=b-a-1.
\]
Thus the shared hook is
\[
h=h_\lambda(p)=h'_{\tilde\lambda}(p)
=\al(\lambda_a-\lambda_b-1)+(b-a).
\]

Only the shifted coordinates in rows $a$ and $b$ change.  A direct calculation
gives
\[
z_a(\lambda)-z_b(\tilde\lambda)
=\al\lambda_a-a-\bigl(\al(\lambda_b+1)-b\bigr)
=\al(\lambda_a-\lambda_b-1)+(b-a)
=h,
\]
and
\[
z_b(\lambda)-z_a(\tilde\lambda)
=\al\lambda_b-b-\bigl(\al(\lambda_a-1)-a\bigr)
=-h.
\]
For every row $r\notin\{a,b\}$ we have
$z_r(\lambda)=z_r(\tilde\lambda)$.  Hence the two shifted-coordinate multisets
agree modulo $h$ after interchanging the two affected rows $a$ and $b$.
\end{proof}


\subsection{Function Congruence}

The following key lemma shows that a pivot congruence holds
for \emph{every} $\al$-shifted symmetric function.

\begin{lemma}\label{lem:regular-pivot}
Let $\lambda\sim_p\tilde\lambda$ be a pivot with shared hook $h$.  For any
$\al$-shifted symmetric function $F\in\LamsA$, we have
\[
F(\lambda)\equiv F(\tilde\lambda)\pmod h.
\]
Equivalently $F(\lambda)-F(\tilde\lambda)$
is divisible by $h$ in $\QQ[\al^\pm]$.
\end{lemma}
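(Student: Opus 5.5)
Set $R=\QQ[\al^\pm]$ and work in the quotient ring $R/(h)$. Since $h=\al x+y$ with $x,y\ge1$, $h$ is not a unit of $R$, so $(h)$ is a proper ideal and "divisible by $h$" means exactly that the image of $F(\lambda)-F(\tilde\lambda)$ in $R/(h)$ vanishes. The plan is to reduce to a finite number of variables, write $F$ as a polynomial in power sums of the shifted variables, and then apply Lemma~\ref{lem:shifted-coordinate-pivot} termwise.

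\textbf{Step 1 (finite truncation).} Fix $N$ at least the number of nonzero rows of both $\lambda$ and $\tilde\lambda$; in particular $N\ge b$. Evaluating $F$ at $\lambda$ only involves the specialization $x_i=\lambda_i$ for $i\le N$ and $x_i=0$ for $i>N$. Hence $F(\lambda)=F_N(\lambda_1,\dots,\lambda_N)$, where $F_N\in R[x_1,\dots,x_N]$ is the restriction of $F$, and it is symmetric in $z_1,\dots,z_N$ with $z_i=\al x_i-i$. The same $F_N$ also computes $F(\tilde\lambda)$.

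\textbf{Step 2 (express $F_N$ in the $z$-variables).} Because $\al$ is invertible in $R$, the substitution $x_i=\al^{-1}(z_i+i)$ is an $R$-algebra isomorphism $R[x_1,\dots,x_N]\cong R[z_1,\dots,z_N]$. Therefore $F_N=G(z_1,\dots,z_N)$ for some polynomial $G$ with coefficients in $R$ that is symmetric in the $z_i$. This is the one place where Laurent coefficients are really needed. Over $\QQ[\al]$ the change of variables would introduce $\al^{-1}$; over $\QQ(\al)$ the coefficients could contain denominators divisible by $h$, and then no congruence could hold (compare the example $f_2$).

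\textbf{Step 3 (apply the pivot lemma).} By the fundamental theorem of symmetric polynomials over the $\QQ$-algebra $R$, $G$ is a polynomial with coefficients in $R$ in the power sums $p_k(z)=\sum_{i\le N} z_i^k$. In fact no such expansion is needed: $G$ is symmetric, so its value does not change when the coordinates are permuted. Apply the transposition $a\leftrightarrow b$ to the tuple $(z_i(\tilde\lambda))_{i\le N}$. By Lemma~\ref{lem:shifted-coordinate-pivot}, the permuted tuple agrees coordinatewise with $(z_i(\lambda))_{i\le N}$ modulo $h$. Concretely, $z_a(\lambda)=z_b(\tilde\lambda)+h$, $z_b(\lambda)=z_a(\tilde\lambda)-h$, and the remaining coordinates are equal. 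Since $G$ is a polynomial with coefficients in $R$, its values agree in $R/(h)$:
\[
F(\lambda)=G\bigl(z(\lambda)\bigr)\equiv G\bigl(\text{permuted } z(\tilde\lambda)\bigr)=G\bigl(z(\tilde\lambda)\bigr)=F(\tilde\lambda)\pmod h.
\]

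\textbf{Main obstacle.} The step needing most care is the truncation and stability issue, together with the fact that the coefficients of $F$ stay in $R$ after rewriting in the $z$-variables. I will handle it by working in finitely many variables throughout, with $N$ fixed in advance. In that setting shifted symmetry in $z_1,\dots,z_N$ is exactly symmetry of $G$, and no passage to an inverse limit is required.
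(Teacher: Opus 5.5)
Your proof is correct and follows the paper's argument. You truncate to finitely many variables and view $F$ as a symmetric polynomial in $z_1,\dots,z_N$ over $\QQ[\al^\pm]$, use symmetry to absorb the transposition $a\leftrightarrow b$ from Lemma~\ref{lem:shifted-coordinate-pivot}, and conclude because polynomial evaluation respects congruences modulo $h$; the paper phrases this last step as $f(z_1+\delta,\dots)-f(z_1,\dots)\in\delta R$. Your explicit check that the substitution $x_i=\al^{-1}(z_i+i)$ keeps the coefficients in $\QQ[\al^\pm]$ fills in a point the paper leaves implicit.
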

\begin{proof}
We can always work in some finite $N$ truncation of the ring of shifted symmetric functions, so that $F$ is a symmetric polynomial in $z_1,\ldots,z_N$ with coefficients in $\QQ[\al^\pm]$.  By
Lemma~\ref{lem:shifted-coordinate-pivot}, we have the following exchange and $h$-shift
\[
z_a(\lambda)-z_b(\tilde\lambda)=h,
\qquad
z_a(\tilde\lambda)-z_b(\lambda)=h,
\]
where $a,b$ are the row indexes were a box was removed/added.
Since $f_N$ is symmetric, any interchange leaves its value unchanged. 
 
For a ring $R$ and a polynomial $f\in R[z_1,\ldots,z_N]$ and any $\emph{non-invertible}$ element $\delta\in R$,
\begin{equation}\label{eq:modprop}
f(z_1 + \delta, z_2, \ldots) - f(z_1 , z_2, \ldots) \in \delta R,
\end{equation}
since the difference is a polynomial multiple of $\delta$. 

In our case, this holds because $h$ is never proportional to $\al$
($h = \alpha x +y, x,y \geq 1$), so $h$ is always non-invertible in
$\QQ[\al^\pm]$, whose units are only the scalar multiples of powers of
$\al$, as noted after \eqref{eq:shared-hook}.
Applying this to the
two affected coordinates with $\delta=\pm h$ gives $F(\lambda)-F(\tilde\lambda)\in
h \QQ[\al^\pm]$. 
\end{proof}

\begin{example}
The integral shifted Jack function $\Js_\mu$ is $\al$-shifted symmetric, so by
Lemma~\ref{lem:regular-pivot}, for a pivot $\tilde\mu\sim_p\mu$ we must have
\[
\Js_\mu(\mu)\equiv \Js_\mu(\tilde\mu)\pmod h.
\]
By the defining vanishing property of shifted Jacks,
Equation~\eqref{eq:shifted-jack-interpolation}, this is
\[
\al^{-|\mu|}H_\mu H'_\mu\equiv 0\pmod h.
\]
Since $h$ is always one of the hook factors in either $H_\mu$ or $H'_\mu$, the
congruence holds.
\end{example}

\section{Proof of Main Theorem}

By Lemma~\ref{lem:regular-pivot}, every $\al$-shifted symmetric function satisfies the pivot congruence.  It therefore suffices to realize each
shifted Jack Littlewood--Richardson coefficient as the value of an $\al$-shifted
symmetric function evaluated at $\lambda$.

For a triple $(\mu\nu ;\lambda)$ we split the proof into two cases, that of \emph{output} pivots, i.e. $\lambda \sim_p \tilde \lambda$, and \emph{input} pivots for $\mu\sim_p \tilde\mu$ or $\nu \sim_p \tilde\nu$. 

\subsection{Strategy}

The goal is to realize $g_{\mu\nu}^{\lambda}$ as $F^{(k)}(\lambda)$ for a single Laurent shifted symmetric function. The natural candidate is the $\al$-shifted symmetric function
\[ \Js_\mu \Js_\nu = \sum_{\rho: |\rho|\leq |\mu|+|\nu|} d_{\mu\nu}^{\rho} \Js_\rho, \]
which packages all of the structure constants $d_{\mu\nu}^{\rho}$.  To pick out the
single coefficient $g^\lambda_{\mu\nu}$ we want its degree-$k$ component, $k=|\lambda|$,
\begin{equation}\label{deg-k-trunc}
F^{(k)} := \sum_{\rho: |\rho|=k} d_{\mu\nu}^{\rho} \Js_\rho, 
\end{equation}
since then the vanishing property \eqref{eq:shifted-jack-interpolation} leaves a single term,
\[ F^{(k)}(\lambda) = d^\lambda_{\mu\nu}\,\Js_\lambda(\lambda) = \al^{-k}\,g_{\mu\nu}^{\lambda}, \]
and Lemma~\ref{lem:regular-pivot} would apply.

The obstruction is that extracting this graded component $F^{(k)}$ is not a natural operation in the shifted ring, which is only \emph{filtered}. Particularly, although the full product
$\Js_\mu\Js_\nu$ is has Laurent coefficients, its degree-$k$ truncation $F^{(k)}$ (as defined by \eqref{deg-k-trunc}) may not, since the coefficients $d^\rho_{\mu\nu}$ are only rational in $\al$.  We
get around this by transporting the problem to the \emph{graded} ordinary symmetric function ring with Lassalle's shift transform,  where projection onto graded components now preserves the Laurent lattice, and then we can shift back and be confident that we have retained only Laurent shifted symmetric functions.

\subsection{Lassalle's shift transform}

We can move between the filtered ring of shifted symmetric functions and the graded ring of symmetric functions using Lassalle's shift transform.

\begin{proposition}[Lassalle {\cite[Section~3]{Lassalle:2008aa}}]\label{prop:lassalleshift}
There exists a linear
isomorphism
\[
\sh:\Lamfa\;\xrightarrow{\ \sim\ }\;\Lamsfa
\]
from the ring $\Lamfa$ of ordinary symmetric functions, 
onto the ring
$\Lamsfa$ of $\al$-shifted symmetric functions.

This map is characterized by the identity
\begin{equation}\label{eq:lassalle-shift}
\sh(f)(\lambda)=\al^{-|\lambda|}\langle e^{p_1}f,\J_\lambda\rangle_\al
\end{equation}
for every $f\in\Lamfa$ and every partition $\lambda$.
Furthermore, it satisfies $\sh(\J_\rho)=\Js_\rho$.
\end{proposition}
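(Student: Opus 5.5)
The plan is to \emph{define} $\sh$ on the Jack basis and then check the evaluation formula \eqref{eq:lassalle-shift} against the interpolation characterization \eqref{eq:shifted-jack-interpolation}. First, set $\sh(\J_\rho):=\Js_\rho$ and extend $\QQ(\al)$-linearly. This is a linear isomorphism $\Lamfa\to\Lamsfa$, for two reasons:
\begin{enumerate}
\item the $\J_\rho$ form a $\QQ(\al)$-basis of $\Lamfa$;
\item the $\Js_\rho$ form a $\QQ(\al)$-basis of $\Lamsfa$.
\end{enumerate}
For (2), the top homogeneous component of $\Js_\rho$ is $\J_\rho$. So the transition matrix to any graded basis of the filtered ring $\Lamsfa$ (for instance, shifted power sums ordered by degree) is block unitriangular with respect to degree, with invertible diagonal blocks. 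With this definition the identity $\sh(\J_\rho)=\Js_\rho$ holds tautologically. By linearity, it remains to prove \eqref{eq:lassalle-shift} for $f=\J_\rho$, that is,
\[
\Js_\rho(\lambda)=\al^{-|\lambda|}\langle e^{p_1}\J_\rho,\J_\lambda\rangle_\al \qquad\text{for all }\rho,\lambda .
\]

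Second, write $G_\rho(\lambda):=\al^{-|\lambda|}\langle e^{p_1}\J_\rho,\J_\lambda\rangle_\al$ and check that it satisfies the interpolation conditions. Under $\langle\cdot,\cdot\rangle_\al$ the adjoint of multiplication by $p_1$ is $\al\,\partial/\partial p_1$, and the scalar product pairs only equal degrees. Hence only the degree-$|\lambda|$ piece $p_1^{|\lambda|-|\rho|}\J_\rho/(|\lambda|-|\rho|)!$ of $e^{p_1}\J_\rho$ contributes. Consequences:
\begin{itemize}
\item If $|\lambda|<|\rho|$, this piece is zero, so $G_\rho(\lambda)=0$.
\item If $|\lambda|=|\rho|$, the piece is $\J_\rho$ itself, and orthogonality gives $G_\rho(\lambda)=\al^{-|\rho|}\delta_{\rho\lambda}j_\rho=\al^{-|\rho|}\delta_{\rho\lambda}H_\rho H'_\rho$.
\end{itemize}
These are exactly the values in \eqref{eq:shifted-jack-interpolation}. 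For $|\lambda|>|\rho|$ one gets the explicit formula
\[
G_\rho(\lambda)=\frac{\al^{-|\rho|}}{(|\lambda|-|\rho|)!}\,\bigl\langle \J_\rho,(\partial_{p_1})^{|\lambda|-|\rho|}\J_\lambda\bigr\rangle_\al .
\]

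Third, show that $\lambda\mapsto G_\rho(\lambda)$ is the restriction to partitions of an $\al$-shifted symmetric function of degree at most $|\rho|$. A shifted symmetric function is determined by its values on partitions, so uniqueness in \eqref{eq:shifted-jack-interpolation} then forces $G_\rho=\Js_\rho$. I expect this to be the main obstacle, since shifted symmetry is not visible from the pairing formula.

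The first thing I would try is to recognize the formula above as the Jack binomial theorem of Okounkov--Olshanski and Lassalle. Expanding $\J_\lambda(x_1+1,\dots,x_n+1)$ (equivalently, applying $e^{\partial_{p_1}}$ up to normalizations) in the basis $\J_\rho$ produces the generalized binomial coefficients $\binom{\lambda}{\rho}$. These are known to equal $\Js_\rho(\lambda)/\Js_\rho(\rho)$, and so are shifted symmetric in $\lambda$ of degree at most $|\rho|$. The remaining work is to match the normalizations ($\al$-powers and $j_\rho$) so that $G_\rho(\lambda)=\Js_\rho(\rho)\binom{\lambda}{\rho}$.

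If that route is unavailable, I would instead argue by induction on $|\lambda|-|\rho|$. The Pieri rule for $\partial_{p_1}\J_\lambda$ (removing one box) gives a recursion in $\lambda$. I would compare it with the Okounkov--Olshanski recursion for interpolation polynomials, in which $\Js_\rho(\lambda)$ is expressed through $\Js_\rho$ at partitions $\lambda$ with one box removed. Since both recursions share the same initial values from the second step, this would give equality on all partitions.
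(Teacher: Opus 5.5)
The paper gives no proof of this proposition; it quotes it from Lassalle. Your outline is essentially the route one would take: define $\sh$ on the Jack basis, read off the values for $|\lambda|\le|\rho|$ by degree counting, and reduce shifted symmetry to generalized binomial coefficients. Your first two steps are correct, including $G_\rho(\lambda)=\frac{\al^{-|\rho|}}{k!}\langle\J_\rho,\partial_{p_1}^k\J_\lambda\rangle_\al$ with $k=|\lambda|-|\rho|$.

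The gap is in the third step, where all the content lies. Translating every variable by $1$ is not ``equivalently'' $e^{\partial_{p_1}}$. It is $\exp(E_0)$ with $E_0=\sum_i\partial_{x_i}$, and on symmetric functions of $n$ variables $E_0=n\,\partial_{p_1}+\sum_{k\ge2}k\,p_{k-1}\partial_{p_k}$. Accordingly the binomial theorem reads $\J_\lambda(x_1+1,\dots,x_n+1)=\sum_\mu\binom{\lambda}{\mu}\frac{\J_\lambda(1^n)}{\J_\mu(1^n)}\J_\mu(x)$. The discrepancy factor is $\J_\lambda(1^n)/\J_\mu(1^n)=\prod_{(i,j)\in\lambda/\mu}(n-i+1+\al(j-1))$, which depends on $n$ and on the boxes of $\lambda/\mu$; no bookkeeping of $\al$-powers and $j_\rho$ removes it. What you actually need is the separate identity
\[
\frac{1}{k!}\,\partial_{p_1}^k\J_\lambda=\sum_{|\mu|=|\lambda|-k}\binom{\lambda}{\mu}\J_\mu .
\]
Once you have it, $G_\rho(\lambda)=\al^{-|\rho|}j_\rho\binom{\lambda}{\rho}=\Js_\rho(\rho)\binom{\lambda}{\rho}=\Js_\rho(\lambda)$ follows at once, with no uniqueness argument needed. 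One way to derive it is to translate by $s/n$ instead of $1$:
\begin{itemize}
\item By homogeneity, the coefficient of $\J_\mu$ becomes $\binom{\lambda}{\mu}\frac{\J_\lambda(1^n)}{\J_\mu(1^n)}(s/n)^k$, which tends to $\binom{\lambda}{\mu}s^k$.
\item On the finite-dimensional space of degree $\le|\lambda|$, $\exp(\tfrac{s}{n}E_0)=\exp\bigl(s\,\partial_{p_1}+\tfrac{s}{n}\sum_{k\ge2}kp_{k-1}\partial_{p_k}\bigr)$.
\item Both sides are polynomials in $1/n$ that agree for all $n\ge|\lambda|$. Setting $1/n=0$ gives the identity.
\end{itemize}
As written, your proposal asserts this step rather than proving it.

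Your fallback has the same hole. Write $\partial_{p_1}\J_\lambda=\sum_\nu a_{\lambda\nu}\J_\nu$. Your recursion is then $k\,G_\rho(\lambda)=\sum_\nu a_{\lambda\nu}G_\rho(\nu)$, while the corresponding one-box recursion for interpolation polynomials is $(|\lambda|-|\rho|)\Js_\rho(\lambda)=\sum_\nu\binom{\lambda}{\nu}\Js_\rho(\nu)$. Equal initial values are not enough: the two recursions have the same solution only if $a_{\lambda\nu}=\binom{\lambda}{\nu}$ for every one-box removal $\nu$. Already for $|\lambda|=|\rho|+1$ you get $G_\rho(\lambda)=a_{\lambda\rho}\Js_\rho(\rho)$. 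So the claim at that level is exactly this one-box identity, which your second step does not supply. It does follow by taking the coefficient of $n$ in
\[
E_0\J_\lambda=\sum_\nu\binom{\lambda}{\nu}(n-i+1+\al(j-1))\J_\nu ,
\]
where $(i,j)$ is the box $\lambda/\nu$. This holds for all $n\ge|\lambda|$, hence as polynomials in $n$. But that step also has to be made explicit.
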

For our purposes, we need to strengthen this to an isomorphism of rings with Laurent $\QQ[\al^\pm]$ coefficients. 
Recall that $\LamsA$ carries the Laurent basis of \emph{stable shifted power
sums}
\[
p^\circ_k=\sum_i\bigl(z_i^k-(-i)^k\bigr),\qquad
P_\nu=\prod_j p^\circ_{\nu_j},\qquad z_i=\al x_i-i,
\]
which are manifestly $\al$-shifted symmetric with Laurent coefficients and form
a basis $\{P_\nu\}$ of $\LamsA$.

\begin{theorem}[Knop--Sahi \cite{Knop:1997aa}]\label{thm:knopsahi-laurent}
The Jack characters are Laurent shifted symmetric functions, $\sh(p_\mu)\in\LamsA$.
Equivalently, in any $\QQ[\alpha^\pm]$ basis of $\LamsA$ their coefficients are Laurent in
$\al$. E.g.\ in the stable shifted power sums, $[P_\nu]\,\sh(p_\mu)\in\QQ[\al^\pm]$.
\end{theorem}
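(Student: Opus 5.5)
The plan is to turn the defining identity \eqref{eq:lassalle-shift} into an explicit formula for $\sh(p_\mu)$. I would then produce $\sh(p_\mu)$ from the constant function $1$ by a recursion whose steps visibly preserve Laurent coefficients.

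\emph{Step 1 (explicit values).} Write $e^{p_1}p_\mu=\sum_{r\ge0}p_\mu p_1^r/r!$. Pairing against $\J_\lambda$ only sees the term of degree $|\lambda|$, i.e.\ $r=|\lambda|-|\mu|$. Expanding $\J_\lambda=\sum_\rho\theta_\rho(\lambda)p_\rho$ and using \eqref{def:metricalpha} gives
\[
\sh(p_\mu)(\lambda)=\al^{-|\lambda|}\,\frac{z_{\mu\cup1^r}\,\al^{\ell(\mu)+r}}{r!}\,\theta_{\mu\cup1^r}(\lambda).
\]
Equivalently, by the Cauchy identity, the generating series $\sum_\mu \sh(p_\mu)(\lambda)\,p_\mu(y)/(z_\mu\al^{\ell(\mu)})$ equals $\al^{-|\lambda|}\J_\lambda(y)$ with $p_1(y)$ replaced by $p_1(y)+\al$, since $\exp(\al\,\partial_{p_1})$ is the adjoint of multiplication by $e^{p_1}$.

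Since $\theta_\rho(\lambda)\in\QQ[\al]$, every value $\sh(p_\mu)(\lambda)$ lies in $\QQ[\al^\pm]$. By Proposition~\ref{prop:lassalleshift}, $\sh(p_\mu)$ is an element of $\Lamsfa$ of degree at most $|\mu|$.

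\emph{Step 2 (from values to coefficients).} Laurent values at every partition do not by themselves force Laurent coefficients in the basis $\{P_\nu\}$. Interpolating in the $\Js_\rho$ basis divides by $\al^{-|\rho|}H_\rho H'_\rho$ and reintroduces hook denominators. So I would instead show that $\sh$ intertwines suitable operators. I would use the classical commutation relations of the Jack Laplace--Beltrami operator $D_\al$ with multiplication by $p_k$ and with $p_1^\perp$. The operator $D_\al$ is diagonal on $\J_\lambda$ with eigenvalue $\tfrac12\sum_i\bigl(z_i(\lambda)^2-i^2\bigr)/\al$ up to a linear term, which is a Laurent element of $\LamsA$.

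Under $\sh$, diagonal operators become multiplication by their eigenvalue functions. Multiplication by $p_1$ becomes a discrete difference operator in the $z_i$ whose coefficients are read off from the Pieri rule. The aim is a recursion of the form
\[
\sh(p_{k+1}p_\nu)=(\text{Laurent eigenvalue})\cdot\sh(p_k p_\nu)-(\text{Laurent combination of }\sh(p_{\nu'})\text{ with }|\nu'|<|\nu|+k+1),
\]
and induction on $|\mu|$ would then close the argument.

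\emph{Main obstacle.} The hard step is verifying that the correction terms in the recursion, which come from $[D_\al,p_k]$, carry only powers of $\al$ and no factors $(\al x+y)$. The relation $[D_\al,p_k]=k\,p_{k+1}$ plus terms in $p_ip_{k+1-i}$ and $p_{k-1}$ has coefficients in $\QQ[\al^\pm]$, which is promising. What I would check first is that the $e^{p_1}$ twist does not introduce extra terms: conjugating $D_\al$ by $e^{p_1}$ yields the operator $D_\al+p_1$ times something plus a $p_1^\perp$ term, and all of these terms have coefficients in $\QQ[\al^\pm]$. If that verification goes through, the induction gives $[P_\nu]\,\sh(p_\mu)\in\QQ[\al^\pm]$.
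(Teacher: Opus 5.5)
The paper gives no proof of this statement; it imports it from Knop--Sahi, so your proposal has to stand on its own. Step~1 is correct, and you are right that Laurent values alone prove nothing. Step~2 has a genuine gap: the recursion you aim for does not exist with the operators you list.

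For a self-adjoint operator $D$ diagonal on the $\J_\lambda$ with eigenvalue function $E$, the correct transport rule is $E\cdot\sh(f)=\sh\bigl(e^{-p_1}De^{p_1}f\bigr)$. Normalize $D_\al$ so that $D_\al\J_\lambda=(\al n(\lambda')-n(\lambda))\J_\lambda$. Then
\[
e^{-p_1}D_\al e^{p_1}=D_\al+\al\sum_{j\ge1}j\,p_{j+1}\partial_{p_j}+\tfrac{\al}{2}\,p_2 ,
\]
whose top-degree part is multiplication by $\tfrac{\al}{2}p_2$. Your displayed recursion is therefore degree-inconsistent: the leading term of $E\cdot\sh(p_kp_\nu)$ is $\tfrac{\al}{2}\sh(p_2p_kp_\nu)$, a new unknown of higher degree. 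Already the first relation involving $p_3$,
\[
E\,\sh(p_2)=\tfrac{\al}{2}\sh(p_2^2)+2\al\,\sh(p_3)+\sh(p_1^2)+(\al-1)\sh(p_2),
\]
contains two unknowns. The Euler operator $\mathcal N=\sum_i i\,p_i\partial_{p_i}$ gives the Laurent rule $\sh(p_1f)=\al^{-1}P_1\,\sh(f)-\sh(\mathcal Nf)$; this, not the Pieri rule, is how to handle $p_1$. Even with it, $D_\al$ only certifies Laurentness on $\sh^{-1}\bigl(\QQ[\al^\pm][E,P_1]\bigr)$. The top-degree parts of that submodule are polynomials in $p_1,p_2$, so it contains neither $p_3$ nor $p_2^2$.

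The commutators you propose do not repair this. First, $[D_\al,p_k]$ is not a multiplication operator, since it contains $\al k\sum_j j\,p_{k+j}\partial_{p_j}$. More seriously, multiplication by $p_k$ and $p_1^\perp=\al\partial_{p_1}$ have no a priori Laurent counterpart on the shifted side. Their transports are Pieri-type difference operators with hook denominators, for example $\partial_{p_1}\J_{(2,1)}=\tfrac{\al+2}{\al+1}\J_{(2)}+\tfrac{2\al+1}{\al+1}\J_{(1,1)}$. Controlling such denominators is the entire content of the theorem, so the ``main obstacle'' you flag is not a routine verification.

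What is missing is a commuting family of operators $\mathcal O_k$, diagonal on the $\J_\lambda$ with eigenvalue functions $p^\circ_k$, whose coefficients in the power-sum basis lie in $\QQ[\al^\pm]$. Higher Calogero--Sutherland or quantum Benjamin--Ono Hamiltonians are candidates. Given such a family, the intertwining forces $e^{-p_1}\mathcal O_ke^{p_1}f=\al^kp_kf+(\text{lower degree})$. Then $\sh(p_kp_\nu)=\al^{-k}\bigl(p^\circ_k\,\sh(p_\nu)-\sh(\text{lower degree})\bigr)$, and your induction closes. Proving that Laurentness is an input of the same depth as the cited theorem, and your proposal does not supply it.
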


\begin{corollary}\label{cor:jack-laurent-iso}
The shift map restricts to an isomorphism of $\QQ[\al^\pm]$-lattices
\[
\sh:\LamA\;\xrightarrow{\ \sim\ }\;\LamsA .
\]
\end{corollary}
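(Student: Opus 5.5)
We need to prove Corollary~\ref{cor:jack-laurent-iso}: $\sh$ maps $\LamA$ into $\LamsA$, and the restricted map is bijective onto $\LamsA$.

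\emph{Into $\LamsA$.} The power sums $\{p_\mu\}$ form a $\QQ[\al^\pm]$-basis of $\LamA$, since $\Lam$ is a $\QQ$-vector space with basis $p_\mu$. Proposition~\ref{prop:lassalleshift} gives $\sh$ as a $\QQ(\al)$-linear map. Theorem~\ref{thm:knopsahi-laurent} says $\sh(p_\mu)\in\LamsA$. Hence $\sh(\LamA)\subseteq\LamsA$, and the restriction is injective because $\sh$ is injective on $\Lamfa$.

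\emph{Surjectivity: triangularity.} I would compare top-degree components with respect to the filtration by degree in the $x_i$. Expanding \eqref{eq:lassalle-shift} with $e^{p_1}=\sum_r p_1^r/r!$ shows that only terms of degree at most $|\mu|$ contribute. The top component of $\sh(p_\mu)$ should therefore be $p_\mu$ itself, after substituting $x_i=z_i/\al$ in the appropriate normalization. The cleanest way to see this is via $\sh(J_\rho)=\Js_\rho$, whose top component is $J_\rho$. Expanding $p_\mu$ in the $J_\rho$ and taking top parts gives $\sh(p_\mu)=p_\mu(x)+(\text{lower degree})$.

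\emph{Matching with the $P_\nu$ basis.} The stable shifted power sum $p^\circ_k=\sum_i\bigl((\al x_i-i)^k-(-i)^k\bigr)$ has top component $\al^k p_k(x)$. So $P_\nu$ has top component $\al^{|\nu|}p_\nu(x)$, and the transition matrix from $\{P_\nu\}$ to $\{\sh(p_\mu)\}$ is unitriangular with respect to degree. Its entries lie in $\QQ[\al^\pm]$ by Theorem~\ref{thm:knopsahi-laurent}, and its diagonal entries are $\al^{-|\mu|}$, which are units in $\QQ[\al^\pm]$.

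\emph{Inverting degree by degree.} Given $F\in\LamsA$, write $F=\sum c_\nu P_\nu$ with $c_\nu\in\QQ[\al^\pm]$. Induct on the degree $d$ of $F$. Set
\[
F' = F-\sum_{|\nu|=d} c_\nu\,\al^{|\nu|}\,\sh(p_\nu).
\]
Then $F'\in\LamsA$ and has degree less than $d$, so by induction $F'$ lies in $\sh(\LamA)$. Therefore $F\in\sh(\LamA)$.

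The main obstacle is the second step: pinning down the precise leading term of $\sh(p_\mu)$, including the exact power of $\al$ (equivalently, the top component of $\Js_\rho$ in these $x$-variables). Any error there changes the diagonal entries, but only by powers of $\al$ and nonzero rationals, which are still units. So the argument only needs the weaker fact that the top component of $\sh(p_\mu)$ is a unit multiple of the top component of $P_\mu$. I would verify this from the interpolation normalization \eqref{eq:shifted-jack-interpolation} together with $\sh(J_\rho)=\Js_\rho$.
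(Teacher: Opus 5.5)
Your proposal is correct and follows the paper's route. Knop--Sahi gives $\sh(p_\mu)\in\LamsA$, and the degree-triangularity of $[P_\nu]\,\sh(p_\mu)$ with unit diagonal lets you invert over $\QQ[\al^\pm]$; your degree-by-degree induction is just the explicit inversion of that triangular matrix. Your bookkeeping is slightly more accurate than the paper's, which calls the matrix unitriangular with determinant $1$. Since $P_\nu$ has top part $\al^{|\nu|}p_\nu(x)$ while $\sh(p_\mu)=p_\mu(x)+(\text{lower degree})$, the diagonal entries are actually $\al^{-|\mu|}$, which, as you observe, is harmless because they are units.
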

\begin{proof}
By Theorem~\ref{thm:knopsahi-laurent}, $\sh$ carries the power-sum lattice $\LamA$ into $\LamsA$ with
coefficients in $\QQ[\al^\pm]$ (the only denominators being the monomial factors
$\al^{-|\lambda|}$ of \eqref{eq:lassalle-shift}).  Moreover $\sh$ is
unitriangular for the degree filtration, that is, its top-degree part is the identity, so
$\sh(p_\mu)=p_\mu+(\text{lower degree})$.  The transition matrix $R_{\mu\nu}:=[P_\nu]\sh(p_\mu)$, for all $\mu,\nu$ of degree size than any fixed $k$, is therefore unitriangular over $\QQ[\al^\pm]$,
with determinant $1$, hence its inverse is again defined over $\QQ[\al^\pm]$.
\end{proof}
The analogue of this Laurent property (Corr.~\eqref{cor:jack-laurent-iso}) is only conjectural in the Macdonald case
(Conj.~\ref{conj:mac-laurent-iso}).

\subsection{Output pivots}

\begin{figure}[ht]
\centering
\begin{tikzpicture}[>=stealth,scale=1.0]
  \draw[gray!50] (0,0) -- (-3.12,4.9);
  \draw[gray!50] (0,0) -- (3.12,4.9);
  \draw[gray!50] (0,4.9) ellipse [x radius=3.12, y radius=0.60];
  \draw[thick] (-0.51,1.2) -- (-1.8,4.2);
  \draw[thick] (0.51,1.2) -- (1.8,4.2);
  \draw[thick] (0,4.2) ellipse [x radius=1.8, y radius=0.40];
  \draw[thick] (0,2.4) ellipse [x radius=1.03, y radius=0.23];
  \draw[thick] (0,1.2) ellipse [x radius=0.51, y radius=0.12];
  \node at (-1.2,2.0) {$\Phi_{\mu\nu}$};
  \draw[->] (-3.2,-0.3) -- (-3.2,5.3) node[above] {\footnotesize degree};
  \draw[dashed] (-3.2,4.2) -- (1.8,4.2);
  \draw[dashed] (-3.2,2.4) -- (1.03,2.4);
  \draw[dashed] (-3.2,1.2) -- (0.51,1.2);
  \draw[dashed] (-3.2,0) -- (0,0);
  \node[left] at (-3.2,4.2) {$|\mu|+|\nu|$};
  \node[left] at (-3.2,2.4) {$k$};
  \node[left] at (-3.2,1.2) {$\max(|\mu|,|\nu|)$};
  \node[left] at (-3.2,0) {$0$};
  \draw[->] (4.6,4.2) -- (1.85,4.2);
  \node[right] at (4.6,4.2) {$\Phi^{(|\mu|+|\nu|)}_{\mu\nu}=\J_\mu\J_\nu$};
  \draw[->] (4.6,2.4) -- (1.08,2.4);
  \node[right] at (4.6,2.4) {$\Phi^{(k)}_{\mu\nu}$};
  \draw[->] (4.6,1.2) -- (0.56,1.2);
  \node[right] at (4.6,1.2) {$\Phi^{(\max(|\mu|,|\nu|))}_{\mu\nu}$};
\end{tikzpicture}
\caption{The graded symmetric function
$\Phi_{\mu\nu}=\operatorname{sh}^{-1}(\Js_\mu\Js_\nu)$ has top degree
$|\mu|+|\nu|$, where its leading slice is the ordinary product $\J_\mu\J_\nu$.
By Knop--Sahi vanishing $\Phi^{(k)}_{\mu\nu}=0$ for $k<\max(|\mu|,|\nu|)$, so
its support is the band $\max(|\mu|,|\nu|)\le k\le|\mu|+|\nu|$.}
\label{fig:phi-cone}
\end{figure}

\begin{proposition}\label{prop:output-pivots}
Let $\lambda\sim_p\tilde\lambda$ be a pivot with shared
hook $h$.  Then
\[
g^\lambda_{\mu\nu}\equiv g^{\tilde\lambda}_{\mu\nu}\pmod h .
\]
\end{proposition}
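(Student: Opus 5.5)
The plan follows the strategy already laid out, using the shift transform to replace the problematic truncation $F^{(k)}$ by a graded projection. Set $\Phi_{\mu\nu}:=\sh^{-1}(\Js_\mu\Js_\nu)$. Since $\Js_\mu,\Js_\nu\in\LamsA$ and $\LamsA$ is a ring, the product lies in $\LamsA$. Corollary~\ref{cor:jack-laurent-iso} then gives $\Phi_{\mu\nu}\in\LamA$.

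Next, decompose $\Phi_{\mu\nu}=\sum_k\Phi^{(k)}_{\mu\nu}$ into homogeneous components. The ordinary ring is graded and the $p_\rho$ are homogeneous, so each $\Phi^{(k)}_{\mu\nu}$ still lies in $\LamA$. By linearity and $\sh(\J_\rho)=\Js_\rho$ we have $\Phi_{\mu\nu}=\sum_\rho d^\rho_{\mu\nu}\J_\rho$. Because $\J_\rho$ is homogeneous of degree $|\rho|$, this gives
\[
\Phi^{(k)}_{\mu\nu}=\sum_{|\rho|=k}d^\rho_{\mu\nu}\J_\rho,
\qquad
\sh\bigl(\Phi^{(k)}_{\mu\nu}\bigr)=F^{(k)}.
\]
Applying Corollary~\ref{cor:jack-laurent-iso} in the forward direction shows $F^{(k)}\in\LamsA$. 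This is exactly the Laurent property that could not be obtained directly in the filtered ring.

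Now take $k=|\lambda|=|\tilde\lambda|$; a pivot preserves size, so the same $F^{(k)}$ serves both partitions. The vanishing in \eqref{eq:shifted-jack-interpolation} applies to each $\Js_\rho$ with $|\rho|=k$ evaluated at a partition of size $k$, so
\[
F^{(k)}(\lambda)=\al^{-k}g^\lambda_{\mu\nu},\qquad F^{(k)}(\tilde\lambda)=\al^{-k}g^{\tilde\lambda}_{\mu\nu}.
\]
Lemma~\ref{lem:regular-pivot} gives $h\mid \al^{-k}\bigl(g^\lambda_{\mu\nu}-g^{\tilde\lambda}_{\mu\nu}\bigr)$ in $\QQ[\al^\pm]$. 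Since $\al^k$ is a unit there, $h\mid g^\lambda_{\mu\nu}-g^{\tilde\lambda}_{\mu\nu}$.

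An alternative to the interpolation step is to read the evaluation off \eqref{eq:lassalle-shift}. There $\langle e^{p_1}\Phi^{(k)},\J_\lambda\rangle_\al$ sees only the degree-$k$ part, because $e^{p_1}$ raises degree; this gives $\langle\Phi^{(k)},\J_\lambda\rangle_\al=d^\lambda_{\mu\nu}j_\lambda$ directly.

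The main obstacle is justifying that graded projection preserves the Laurent lattice under $\sh$, which requires $\sh^{-1}$ itself to be Laurent. Corollary~\ref{cor:jack-laurent-iso} is precisely what supplies this. Everything else reduces to bookkeeping: the identification $\sh(\J_\rho)=\Js_\rho$ and the unit factor $\al^{-k}$.
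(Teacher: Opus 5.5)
Your proposal is correct and follows the paper's proof: pull $\Js_\mu\Js_\nu$ back to $\Phi_{\mu\nu}\in\LamA$ via Corollary~\ref{cor:jack-laurent-iso}, take the degree-$k$ component, push it forward, apply Lemma~\ref{lem:regular-pivot}, and cancel the unit $\al^{-k}$. The only difference is in how you evaluate $\sh(\Phi^{(k)}_{\mu\nu})(\lambda)$: your main line uses the interpolation vanishing of the $\Js_\rho$, while the paper uses \eqref{eq:lassalle-shift} and orthogonality, which is your stated alternative (and you justify dropping $e^{p_1}$ more carefully than the paper does).
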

\begin{proof}
Recall from \eqref{eq:shifted-jack-interpolation} and its sequel that
$\Js_\mu\Js_\nu=\sum_\rho d^\rho_{\mu\nu}\Js_\rho$ with
$g^\rho_{\mu\nu}=H_\rho H'_\rho\,d^\rho_{\mu\nu}$.  We first record that the
relevant combination of these constants is a Laurent symmetric function, even
though the individual
$d^\rho_{\mu\nu}=\tfrac{H_\mu H_\nu}{H_\rho}c^\rho_{\mu\nu}$ may carry
hook denominators.  Indeed, $\Js_\mu\Js_\nu$ is an $\al$-shifted
symmetric function, and the inverse transform $\sh^{-1}$ is a
$\QQ[\al^\pm]$-linear isomorphism
$\LamsA\to\LamA$ with $\sh^{-1}(\Js_\rho)=\J_\rho$, so
\[
\Phi_{\mu\nu} :=\sh^{-1}(\Js_\mu\Js_\nu)=\sum_{\rho: |\rho|\leq |\mu|+|\nu|} d^\rho_{\mu\nu}\,\J_\rho
\ \in\ \LamA :
\]
the hook denominators of the individual $d^\rho_{\mu\nu}$ cancel in the sum, even
though each $d$ is in general only a rational function in $\QQ(\al)$ (see
Example~\ref{ex:laurent-theta}).
Let
\[
\Phi_{\mu\nu}^{(k)}:=\sum_{|\rho|=k} d^\rho_{\mu\nu}\,\J_\rho
\]
be the homogeneous degree-$k$ component of $\Phi_{\mu\nu}$. Since
$\Phi_{\mu\nu}\in\LamA$
and each $\J_\rho$ is homogeneous of degree $|\rho|$, we have
$\Phi_{\mu\nu}^{(k)}\in\LamA$ as well.  By \eqref{eq:lassalle-shift} and the
orthogonality $\langle\J_\rho,\J_\sigma\rangle_\al=\delta_{\rho\sigma}H_\rho H'_\rho$, for every
$\lambda$ with $|\lambda|=k$ we have
\[
\sh(\Phi_{\mu\nu}^{(k)})(\lambda)
=\al^{-k}\langle\Phi_{\mu\nu}^{(k)},\J_\lambda\rangle_\al
=\al^{-k}\,d^\lambda_{\mu\nu}H_\lambda H'_\lambda
=\al^{-k}\,g^\lambda_{\mu\nu}.
\]
Thus $\sh(\Phi_{\mu\nu}^{(k)})\in\LamsA$ is a single $\al$-shifted symmetric function whose
values at the partitions of size $k$ are, up to the unit $\al^{-k}$, the
coefficients $g^\lambda_{\mu\nu}$.  Applying Lemma~\ref{lem:regular-pivot} to
$\sh(\Phi_{\mu\nu}^{(k)})$ at the pivot $\lambda\sim_p\tilde\lambda$ (note
$|\lambda|=|\tilde\lambda|=k$) gives
\[
\al^{-k}\,g^\lambda_{\mu\nu}\equiv \al^{-k}\,g^{\tilde\lambda}_{\mu\nu}\pmod h,
\]
and since $\al$ is a unit modulo $h$ by \eqref{eq:shared-hook}, the factor
$\al^{-k}$ cancels, yielding $g^\lambda_{\mu\nu}\equiv g^{\tilde\lambda}_{\mu\nu}
\pmod h$.
\end{proof}

\begin{example}\label{ex:laurent-theta}
Here we show that the Laurentness of $\Phi_{\mu\nu}=\sum_\rho d^\rho_{\mu\nu}\J_\rho$ is a genuine
cancellation. The individual terms in the sum carry hook-line denominators that disappear upon summation.  Take $\mu=\nu=(1^3)$ and the degree-$4$ component $\Phi_{1^3,1^3}^{(4)}$,
which has two terms, indexed by $\rho=(1^4)$ and $\rho=(2,1,1)$. The two
non-zero coefficients $d^\rho$ are
\[
d^{(1^4)}_{(1^3),(1^3)}=\frac{18(\al+1)(\al+2)}{\al(\al+3)},\qquad
d^{(2,1,1)}_{(1^3),(1^3)}=\frac{18(\al+1)}{\al(\al+3)},
\]
each carrying the hook-line denominator $\al+3$.  In the power-sum basis both
$d^\rho\J_\rho$ have $\al+3$ in their denominators, but it cancels in the sum:
\[
\Phi_{1^3,1^3}^{(4)}=d^{(1^4)}_{(1^3),(1^3)}\J_{(1^4)}+d^{(2,1,1)}_{(1^3),(1^3)}\J_{(2,1,1)}
=\frac{18(\al+1)}{\al}\bigl(p_1^4-5\,p_1^2p_2+2\,p_2^2+6\,p_1p_3-4\,p_4\bigr).
\]
\end{example}

\subsection{Input pivots and the general congruence}
For an \emph{input} pivot we keep $\lambda,\mu$ fixed and pivot in $\nu$.  The relevant
object is now the adjoint of the
$\QQ(\al)$-linear operator $L_\mu(f):=\sh^{-1}(\Js_\mu\,\sh(f))$, which satisfies
$L_\mu(\J_\nu)=\Phi_{\mu\nu}$.  Define $\Psi_{\lambda/\mu}\in\Lamfa$ by
\begin{equation}\label{eq:skew-sum-def}
\langle\Psi_{\lambda/\mu},\J_\tau\rangle_\al
=\langle\J_\lambda,L_\mu(\J_\tau)\rangle_\al
=g^\lambda_{\mu\tau}
\qquad\text{for every }\tau ,
\end{equation}
the last equality from $d^\lambda_{\mu\tau}\langle\J_\lambda,\J_\lambda\rangle_\al
=g^\lambda_{\mu\tau}$.  Equivalently
\[
\Psi_{\lambda/\mu}=\sum_{\tau \subseteq \lambda}
\frac{g^\lambda_{\mu\tau}}{\langle\J_\tau,\J_\tau\rangle_\al}\,\J_\tau ,
\]
a finite sum, since $g^\lambda_{\mu\tau}=0$ unless $\tau\subseteq\lambda$ by
Knop--Sahi vanishing \cite{Knop:1996}.  Its homogeneous components
$\Psi_{\lambda/\mu}^{(m)}$ range over $|\lambda|-|\mu|\le m\le|\lambda|$. The
bottom component is the ordinary skew Jack
$\Psi_{\lambda/\mu}^{(|\lambda|-|\mu|)}=\J_{\lambda/\mu}$ governing the top-degree
input pivot, while the top component, supported on the single
term $\tau=\lambda$, records the shifted-Jack evaluation
$d^\lambda_{\mu\lambda}=\Js_\mu(\lambda)$, proportional to the generalized Jack
binomial coefficient $\binom{\lambda}{\mu}_{\al}:=\Js_\mu(\lambda)/\Js_\mu(\mu)$
of \cite{Okounkov:1996,Lassalle:2008aa}, see Figure~\ref{fig:psi-cone}.

\begin{lemma}\label{thm:skew-laurent}
For all partitions $\mu\subseteq\lambda$, $\Psi_{\lambda/\mu}\in\LamA$.
\end{lemma}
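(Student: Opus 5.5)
Since $\Psi_{\lambda/\mu}$ is defined by adjunction against the operator $L_\mu$, we will show that $L_\mu$ maps $\LamA$ to itself and then show that its adjoint does too.

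\textbf{Step 1: $L_\mu$ preserves the Laurent lattice.} By Corollary~\ref{cor:jack-laurent-iso}, $\sh$ and $\sh^{-1}$ are mutually inverse isomorphisms $\LamA\leftrightarrow\LamsA$. By Lassalle, $\Js_\mu\in\LamsA$, and $\LamsA$ is a ring. Hence $L_\mu(f)=\sh^{-1}(\Js_\mu\,\sh(f))$ lies in $\LamA$ for every $f\in\LamA$. Moreover $L_\mu$ sends degree-$\le m$ elements to degree-$\le m+|\mu|$ elements. So on each finite-rank free $\QQ[\al^\pm]$-module of bounded degree it is given by a matrix with entries in $\QQ[\al^\pm]$ in the power-sum basis $\{p_\rho\}$.

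\textbf{Step 2: Transpose with respect to $\langle\cdot,\cdot\rangle_\al$.} In the power-sum basis the form is diagonal, $\langle p_\rho,p_\sigma\rangle_\al=\delta_{\rho\sigma}z_\rho\al^{\ellp(\rho)}$, and $z_\rho\al^{\ellp(\rho)}$ is a unit of $\QQ[\al^\pm]$ because $z_\rho\in\QQ^\times$. Therefore the adjoint $L_\mu^*$ has matrix entries
\[
[p_\sigma]\,L_\mu^*(p_\rho)=\frac{z_\rho\al^{\ellp(\rho)}}{z_\sigma\al^{\ellp(\sigma)}}\,[p_\rho]\,L_\mu(p_\sigma)\in\QQ[\al^\pm].
\]
For fixed $\rho$, only finitely many $\sigma$ contribute, namely $|\sigma|\ge|\rho|-|\mu|$ with $|\sigma|\le|\rho|$, using the degree bound together with the lower-degree structure of $\sh$. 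So $L_\mu^*$ maps $\LamA\to\LamA$.

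\textbf{Step 3: Identify $\Psi_{\lambda/\mu}$.} From \eqref{eq:skew-sum-def}, $\langle\Psi_{\lambda/\mu},\J_\tau\rangle_\al=\langle L_\mu^*\J_\lambda,\J_\tau\rangle_\al$ for every $\tau$. Since the $\J_\tau$ span $\Lamfa$ and the form is nondegenerate, $\Psi_{\lambda/\mu}=L_\mu^*(\J_\lambda)$. Because $\J_\lambda\in\LamP\subset\LamA$, Step 2 gives $\Psi_{\lambda/\mu}\in\LamA$.

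\textbf{Main obstacle.} The delicate point is the finiteness in Step 2, since an adjoint of a filtration-raising operator could a priori involve infinitely many degrees. We will handle it by working degreewise. Pairing forces $|\sigma|=|\rho|$ among homogeneous pieces. Consequently $L_\mu^*(p_\rho)$ only has components in degrees $m$ for which $L_\mu$ maps degree $m$ into degree $|\rho|$ nontrivially, and that requires $|\rho|-|\mu|\le m\le|\rho|$. This matches the stated band of components of $\Psi_{\lambda/\mu}$, and the Knop--Sahi vanishing confirms the sum is finite.
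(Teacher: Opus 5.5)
Your proof is correct and is essentially the paper's argument. Both rest on two facts: $L_\mu$ preserves $\LamA$ by Corollary~\ref{cor:jack-laurent-iso}, and the power-sum norms $z_\rho\al^{\ell(\rho)}$ are units of $\QQ[\al^\pm]$. The only difference is that you construct the full adjoint $L_\mu^*$ on $\LamA$, which is why you need the Knop--Sahi degree band for finiteness. The paper instead reads off the coefficients of the already-finite sum $\Psi_{\lambda/\mu}$ directly from $b_\rho\langle p_\rho,p_\rho\rangle_\al=\langle\J_\lambda,L_\mu(p_\rho)\rangle_\al$, so it needs no finiteness check.
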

\begin{proof}
The operator $L_\mu(f)=\sh^{-1}(\Js_\mu\,\sh(f))$ preserves $\LamA$, as by
Corollary~\ref{cor:jack-laurent-iso} both $\sh$ and $\sh^{-1}$ preserve the
Laurent lattice, and multiplication by $\Js_\mu\in\LamsA$ preserves the ring
$\LamsA$.  Writing
$\Psi_{\lambda/\mu}=\sum_\rho b_\rho p_\rho$, the adjunction
\eqref{eq:skew-sum-def} extended $\QQ(\al)$-linearly gives
\[
b_\rho\,\langle p_\rho,p_\rho\rangle_\al
=\langle\Psi_{\lambda/\mu},p_\rho\rangle_\al
=\langle\J_\lambda,L_\mu(p_\rho)\rangle_\al .
\]
The right side lies in $\QQ[\al^\pm]$, since $\J_\lambda,L_\mu(p_\rho)\in\LamA$ and the
Jack pairing has $\QQ[\al^\pm]$-coefficients in the power-sum basis.  As
$\langle p_\rho,p_\rho\rangle_\al=z_\rho\al^{\ell(\rho)}$ is a unit of $\QQ[\al^\pm]$, each
$b_\rho\in \QQ[\al^\pm]$.
\end{proof}

\begin{definition}\label{def:shifted-skew}
Following Lemma~\ref{thm:skew-laurent}, define the \emph{shifted skew Jack function}\footnote{The subscript $\lambda/\mu$ is notational. It records
the ordered pair $\mu\subseteq\lambda$ and the fact that the leading
(lowest-degree) term of $\Psi_{\lambda/\mu}$ is the ordinary skew Jack
$\J_{\lambda/\mu}$.  It should \emph{not} be read as a skew shape. Like the
ordinary skew Jack itself, $\Js_{\lambda/\mu}$ depends on the pair $(\lambda,\mu)$
and not merely on the skew diagram $\lambda/\mu$.}
by
\[
\Js_{\lambda/\mu}:=\sh\bigl(\Psi_{\lambda/\mu}\bigr)
\ \in\ \LamsA.
\]
\end{definition}

\begin{example}\label{ex:skew-laurent}
Take $\mu=(1,1)$ and $\lambda=(3,1)$, so the skew degree is
$|\lambda|-|\mu|=2$. We examine the first genuinely shifted component of degree $m=3$.  Since $c^\lambda_{\mu\nu}=0$ unless $\nu\subseteq\lambda$, the
only partitions of $3$ that contribute are $\nu=(3)$ and $\nu=(2,1)$.  The two structure constants are
\[
d^{(3,1)}_{(1,1),(3)}
=\frac{4(\al+1)(3\al+1)}{2\al+1},\qquad
d^{(3,1)}_{(1,1),(2,1)}
=\frac{4\al(3\al+1)}{2\al+1}.
\]
Yet in the component $\Psi_{\lambda/\mu}^{(3)}$ the factor $2\al+1$ cancels, and
in the power-sum basis the full component is polynomial
\[
\Psi_{\lambda/\mu}^{(3)}=(12\al+4)\,p_{111}+(24\al^2+8\al)\,p_{21}
+(12\al^3+4\al^2)\,p_3 .
\]
\end{example}

\begin{figure}[ht]
\centering
\begin{tikzpicture}[>=stealth,scale=1.0]
  \draw[gray!50] (0,-0.5) -- (-3.12,5.0);
  \draw[gray!50] (0,-0.5) -- (3.12,5.0);
  \draw[gray!50] (0,5.0) ellipse [x radius=3.12, y radius=0.60];
  \draw[thick] (0,4.6) -- (-1.0,1.8);
  \draw[thick] (0,4.6) -- (1.0,1.8);
  \draw[thick] (0,1.8) ellipse [x radius=1.0, y radius=0.24];
  \draw[thick] (0,3.2) ellipse [x radius=0.5, y radius=0.13];
  \node at (-1.3,2.6) {$\Psi_{\lambda/\mu}$};
  \draw[->] (-3.6,-0.8) -- (-3.6,5.3) node[above] {\footnotesize degree};
  \draw[dashed] (-3.6,4.6) -- (0,4.6);
  \draw[dashed] (-3.6,3.2) -- (0.5,3.2);
  \draw[dashed] (-3.6,1.8) -- (1.0,1.8);
  \draw[dashed] (-3.6,-0.5) -- (0,-0.5);
  \node[left] at (-3.6,4.6) {$|\lambda|$};
  \node[left] at (-3.6,3.2) {$k$};
  \node[left] at (-3.6,1.8) {$|\lambda|-|\mu|$};
  \node[left] at (-3.6,-0.5) {$0$};
  \draw[->] (4.9,4.6) -- (0.05,4.6);
  \node[right,align=left] at (4.9,4.6)
     {$\Psi^{(|\lambda|)}_{\lambda/\mu}=\binom{\lambda}{\mu}_{\!\al}\al^{-|\mu|}H_\mu H'_\mu \,\J_\lambda$};
  \draw[->] (4.9,3.2) -- (0.55,3.2);
  \node[right] at (4.9,3.2) {$\Psi^{(k)}_{\lambda/\mu}$};
  \draw[->] (4.9,1.8) -- (1.05,1.8);
  \node[right] at (4.9,1.8) {$\Psi^{(|\lambda|-|\mu|)}_{\lambda/\mu}=\J_{\lambda/\mu}$};
\end{tikzpicture}
\caption{The function
$\Psi_{\lambda/\mu}$ is supported in degrees $|\lambda|-|\mu|$ through
$|\lambda|$. Its \emph{bottom} slice is the ordinary skew Jack
$\J_{\lambda/\mu}$, while its \emph{top} slice is the single term
$\Js_\mu(\lambda)\,\J_\lambda$, whose coefficient is proportional to the generalized Jack
binomial coefficient $\binom{\lambda}{\mu}_{\al}=\Js_\mu(\lambda)/\Js_\mu(\mu)$
of \cite{Okounkov:1996,Lassalle:2008aa}.}
\label{fig:psi-cone}
\end{figure}

\begin{proposition}\label{prop:input-pivots}
Let $\lambda,\mu$ be fixed and let
$\nu\sim_p\tilde\nu$ be a pivot with shared hook $h$, so $|\nu|=|\tilde\nu|=m$.
Then
\[
g^\lambda_{\mu\nu}\equiv g^\lambda_{\mu\tilde\nu}\pmod h.
\]
\end{proposition}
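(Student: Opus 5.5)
We mirror the proof of Proposition~\ref{prop:output-pivots}, with the shifted skew Jack function $\Js_{\lambda/\mu}$ of Definition~\ref{def:shifted-skew} playing the role that $\sh(\Phi^{(k)}_{\mu\nu})$ played there. The idea is to realize the map $\tau\mapsto g^\lambda_{\mu\tau}$, restricted to partitions $\tau$ of the fixed size $m=|\nu|=|\tilde\nu|$, as evaluation of a single Laurent $\al$-shifted symmetric function at $\tau$. Lemma~\ref{lem:regular-pivot} then applies at the pivot $\nu\sim_p\tilde\nu$.

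\emph{Step 1 (Laurentness of the degree-$m$ slice).} By Lemma~\ref{thm:skew-laurent}, $\Psi_{\lambda/\mu}\in\LamA$. Each $\J_\tau$ is homogeneous, so the homogeneous degree-$m$ component
\[
\Psi^{(m)}_{\lambda/\mu}=\sum_{|\tau|=m}\frac{g^\lambda_{\mu\tau}}{\langle\J_\tau,\J_\tau\rangle_\al}\,\J_\tau
\]
is again in $\LamA$. Projection onto graded components preserves the power-sum lattice, and this is exactly why we pass to the graded side. By Corollary~\ref{cor:jack-laurent-iso}, $F:=\sh(\Psi^{(m)}_{\lambda/\mu})\in\LamsA$.

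\emph{Step 2 (evaluation).} Let $|\tau|=m$. By \eqref{eq:lassalle-shift},
\[
F(\tau)=\al^{-m}\langle e^{p_1}\Psi^{(m)}_{\lambda/\mu},\J_\tau\rangle_\al .
\]
Multiplying by $e^{p_1}$ only raises degree, so the components of degree greater than $m$ pair to zero with $\J_\tau$, which has degree $m$. Hence
\[
F(\tau)=\al^{-m}\langle\Psi^{(m)}_{\lambda/\mu},\J_\tau\rangle_\al=\al^{-m}g^\lambda_{\mu\tau},
\]
using the orthogonality of the $\J_\tau$ and the defining relation \eqref{eq:skew-sum-def}. The same degree argument was implicit in Proposition~\ref{prop:output-pivots}.

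\emph{Step 3 (conclusion).} Apply Lemma~\ref{lem:regular-pivot} to $F$ at $\nu\sim_p\tilde\nu$. This gives
\[
\al^{-m}g^\lambda_{\mu\nu}\equiv\al^{-m}g^\lambda_{\mu\tilde\nu}\pmod h .
\]
Since $h=\al x+y$ with $x,y\ge1$, $\al$ is invertible modulo $h$, and we conclude $g^\lambda_{\mu\nu}\equiv g^\lambda_{\mu\tilde\nu}\pmod h$.

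Pivots in $\mu$ are handled by the symmetry $g^\lambda_{\mu\nu}=g^\lambda_{\nu\mu}$, which holds because $\Js_\mu\Js_\nu=\Js_\nu\Js_\mu$. Together with Proposition~\ref{prop:output-pivots}, this covers every position of the pivot and yields Theorem~\ref{thm:top-pivots}.

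The main obstacle is Step 1. The coefficients $g^\lambda_{\mu\tau}/j_\tau$ individually carry hook denominators, so Laurentness of the slice genuinely relies on Lemma~\ref{thm:skew-laurent}, that is, on $L_\mu$ preserving $\LamA$ through Corollary~\ref{cor:jack-laurent-iso}. Given that lemma, the remaining point to check carefully is the degree bookkeeping in Step 2: that $e^{p_1}$ and the components of degree above $m$ contribute nothing when paired with $\J_\tau$ for $|\tau|=m$.
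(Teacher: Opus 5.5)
Your proposal is correct and follows the paper's proof essentially verbatim. Like the paper, you take the degree-$m$ slice of $\Psi_{\lambda/\mu}$, which is Laurent by Lemma~\ref{thm:skew-laurent}, and apply $\sh$ to get a Laurent shifted symmetric function whose values at size-$m$ partitions are $\al^{-m}g^\lambda_{\mu\tau}$; you then invoke Lemma~\ref{lem:regular-pivot}. Your explicit check that the $e^{p_1}$ factor and the higher-degree terms drop out of the evaluation spells out a detail the paper leaves implicit.
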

\begin{proof}
By Lemma~\ref{thm:skew-laurent}, $\Psi_{\lambda/\mu}\in
\LamA$, hence so is its homogeneous component
$\Psi_{\lambda/\mu}^{(m)}$ of degree $m$, by
\eqref{eq:lassalle-shift}, for every $|\nu|=m$,
\[
\sh\bigl(\Psi_{\lambda/\mu}^{(m)}\bigr)(\nu)
=\al^{-m}\langle\Psi_{\lambda/\mu}^{(m)},\J_\nu\rangle_\al
=\al^{-m}\,g^\lambda_{\mu\nu}.
\]
Applying Lemma~\ref{lem:regular-pivot} to the $\al$-shifted symmetric function
$\sh(\Psi_{\lambda/\mu}^{(m)})$ at the pivot $\nu\sim_p\tilde\nu$ gives
$\al^{-m}g^\lambda_{\mu\nu}\equiv\al^{-m}g^\lambda_{\mu\tilde\nu}\pmod h$, and
since $\al$ is a unit modulo $h$ the factor $\al^{-m}$ cancels.
\end{proof}

\begin{proof}[Proof of Theorem~\ref{thm:top-pivots}]
Combining Proposition~\ref{prop:output-pivots}, Proposition~\ref{prop:input-pivots},
and the symmetry $g^\lambda_{\mu\nu}=g^\lambda_{\nu\mu}$ covers all three pivot
directions, proving Conjecture~\ref{conj:mickler2026}.
\end{proof}

\section{Extension to Macdonald functions}

We end by recording the parallel picture for shifted Macdonald polynomials.  We
will see that the entire Jack argument of Section~\ref{sec:congruence} would transfer
verbatim to the Macdonald case, contingent on two structural properties of
Lassalle's shift map (Conjectures~\ref{conj:mac-laurent-iso}
and~\ref{conj:mac-skew-laurent}). 

\subsection{Definitions and multiplicative structure}

First, we review the necessary definitions.
Let $\QQ[q^{\pm1},t^{\pm1}]$, the ring of Laurent polynomials.  

\begin{definition}
\label{def:qt-shifted-symm}
Let $\Lamsl$ be the ring of $(q,t)$-shifted symmetric functions (see e.g Okounkov {\cite[Section~4]{Okounkov:1998aa}}), given by
\[
\Lamsl=\bigl\{\,f\in \QQ[q^{\pm1},t^{\pm1}][x_1,x_2,\dots]\ :\
f\text{ symmetric in } Y_i(x) :=x_i\,t^{1-i}\,\bigr\}.
\]
We let $\Lamsf=\Lamsl\otimes\QQ(q,t)$ be the $(q,t)$-shifted symmetric functions
with rational coefficients.  We use the same convention on the unshifted side:
$\Laml$ and $\Lamf$ denote ordinary symmetric functions $\Lambda$ with Laurent,
respectively rational, coefficients.
\end{definition}

For a partition $\lambda$, we write $q^\lambda=(q^{\lambda_1},q^{\lambda_2},\ldots)$
for the corresponding evaluation point, with trailing zero parts giving
coordinates equal to $1$.
Set $Y_i(q^\lambda):=q^{\lambda_i}t^{1-i}$.
We also write
\begin{equation}\label{def:n}
n(\lambda)=\sum_i(i-1)\lambda_i.
\end{equation}

In the multiplicative case, the analogue of
Lemma~\ref{lem:shifted-coordinate-pivot} is as follows.  Suppose
$\lambda\sim_p\tilde\lambda$ is obtained by moving one box from row $a$ to
row $b>a$.  Let $h$ be the common $(q,t)$-hook factor at the pivot, and set
\[
h=1-q^{A}t^{B}, \qquad A=\lambda_a-\lambda_b-1,\qquad B=b-a. 
\]

The ring $\Lamsl$ carries a convenient Laurent basis,
Okounkov's \emph{stable shifted power sums}
\[
p^\circ_k=\sum_i t^{k(1-i)}\bigl(x_i^k-1\bigr) = p_k(Y(x))-p_k(Y(1)),
\qquad
P^\circ_\kappa=\prod_j p^\circ_{\kappa_j}.
\]
These are manifestly Laurent, and the products $\{P^\circ_\kappa\}$ form a basis of $\Lamsl$.

\begin{lemma}\label{lem:multiplicative-coordinate-pivot}
For $\lambda \sim_p \tilde \lambda$, we have
\[
\{Y_i(q^\lambda)\}_{i\ge1}
\equiv
\{Y_i(q^{\tilde\lambda})\}_{i\ge1}
\pmod h
\]
as multisets, after interchanging the two affected rows $a\leftrightarrow b$.
\end{lemma}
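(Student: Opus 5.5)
We argue exactly as in Lemma~\ref{lem:shifted-coordinate-pivot}, replacing the additive difference by a ratio. Since $\tilde\lambda_a=\lambda_a-1$, $\tilde\lambda_b=\lambda_b+1$ and all other parts agree, only $Y_a$ and $Y_b$ change, and $Y_r(q^\lambda)=Y_r(q^{\tilde\lambda})$ for $r\notin\{a,b\}$. So it suffices to compare $Y_a(q^\lambda)$ with $Y_b(q^{\tilde\lambda})$, and $Y_b(q^\lambda)$ with $Y_a(q^{\tilde\lambda})$.

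For the first pair we compute the ratio directly:
\[
\frac{Y_a(q^\lambda)}{Y_b(q^{\tilde\lambda})}
=\frac{q^{\lambda_a}t^{1-a}}{q^{\lambda_b+1}t^{1-b}}
=q^{\lambda_a-\lambda_b-1}t^{b-a}=q^At^B .
\]
This gives
\[
Y_a(q^\lambda)-Y_b(q^{\tilde\lambda})=-Y_b(q^{\tilde\lambda})\,(1-q^At^B)=-Y_b(q^{\tilde\lambda})\,h .
\]
For the second pair,
\[
\frac{Y_a(q^{\tilde\lambda})}{Y_b(q^\lambda)}=\frac{q^{\lambda_a-1}t^{1-a}}{q^{\lambda_b}t^{1-b}}=q^At^B,
\]
so that
\[
Y_a(q^{\tilde\lambda})-Y_b(q^\lambda)=-Y_b(q^\lambda)\,h .
\]
Both differences are Laurent monomials times $h$, hence lie in $h\,\QQ[q^{\pm1},t^{\pm1}]$. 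After the interchange $a\leftrightarrow b$, the two multisets therefore agree coordinatewise modulo $h$.

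We should also confirm that $h$ really is the common $(q,t)$-hook. At $p=(a,\lambda_b+1)$ we have $\arm_\lambda(p)=A$ and $\leg_\lambda(p)=B-1$. The lower Macdonald hook is therefore $1-q^{\arm}t^{\leg+1}=1-q^At^B$. In $\tilde\lambda$ the arm is $A-1$ and the leg is $B$, so the upper hook $1-q^{\arm+1}t^{\leg}$ is again $1-q^At^B$.

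The only subtle point is making sure "modulo $h$" is nontrivial. For this, $h$ must be a non-unit in $\QQ[q^{\pm1},t^{\pm1}]$. This holds because $B\ge1$, so $q^At^B\neq1$, and $h$ is not a monomial, whereas the units of that ring are exactly the scalar multiples of monomials. This is what will later allow the argument of Lemma~\ref{lem:regular-pivot} to be transplanted.
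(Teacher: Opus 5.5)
Your proof is correct and follows the paper's argument. Both compute the ratios $Y_a(q^\lambda)/Y_b(q^{\tilde\lambda})$ and $Y_a(q^{\tilde\lambda})/Y_b(q^\lambda)$, find each equal to $q^At^B=1-h$, and note that all other coordinates are unchanged. Your additional steps make explicit what the paper leaves implicit: rewriting the ratios as differences $-Y_b\,h$ (valid since $Y_b$ is a unit monomial), checking that $1-q^At^B$ equals both $c_\lambda(p)$ and $c'_{\tilde\lambda}(p)$, and checking that it is a non-unit.
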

\begin{proof}
Only the two affected rows $(a,b)$ change.  We have
\[
\frac{Y_a(q^\lambda)}{Y_b(q^{\tilde\lambda})} = \frac{q^{\lambda_a}t^{1-a}}{q^{\tilde\lambda_b}t^{1-b}}
=\frac{q^{\lambda_a}t^{1-a}}{q^{\lambda_b+1}t^{1-b}}
= 1-h \equiv 1 \pmod{h},
\]
and similarly
\[
\frac{Y_a(q^{\tilde\lambda})}{Y_b(q^\lambda)}
=\frac{q^{\lambda_a-1}t^{1-a}}{q^{\lambda_b}t^{1-b}}
=1-h \equiv 1 \pmod h.
\]
Thus the two changed shifted coordinates agree after swapping rows modulo
$h$, while all other shifted coordinates are unchanged.
\end{proof}

\begin{lemma}[pivot congruence]\label{lem:macdonald-universal}
Let $F\in\Lamsl$ be a $(q,t)$-shifted symmetric function with Laurent coefficients.  Then
\[
F(q^\lambda)\equiv F(q^{\tilde\lambda})\pmod h .
\]
\end{lemma}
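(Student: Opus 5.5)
The plan is to mirror the proof of Lemma~\ref{lem:regular-pivot}, replacing additive shifts by multiplicative ones. Fix a finite truncation $N$ at least the lengths of $\lambda$ and $\tilde\lambda$. Then $F$ becomes a symmetric polynomial $f$ in $Y_1,\dots,Y_N$ with coefficients in $R=\QQ[q^{\pm1},t^{\pm1}]$. For the stable power sums, truncation just drops the terms with $x_i=1$, which vanish at both evaluation points, so the values agree. Evaluating $F$ at $q^\lambda$ means evaluating $f$ at the point $(Y_i(q^\lambda))_i$, and similarly for $\tilde\lambda$.

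Next, use symmetry of $f$ to swap the entries in positions $a$ and $b$ of the $\tilde\lambda$ point. Call the resulting point $Y'$. By Lemma~\ref{lem:multiplicative-coordinate-pivot}, $Y'$ differs from $Y(q^\lambda)$ only in positions $a$ and $b$. Concretely,
\[
Y_a(q^\lambda)=(1-h)\,Y_b(q^{\tilde\lambda}),\qquad Y_b(q^\lambda)=(1-h)^{-1}Y_a(q^{\tilde\lambda}).
\]
Both $1-h=q^At^B$ and its inverse are units of $R$. Hence each coordinate difference, $Y_a(q^\lambda)-Y'_a=-h\,Y'_a$ and $Y_b(q^\lambda)-Y'_b=\bigl((1-h)^{-1}-1\bigr)Y'_b=h\,q^{-A}t^{-B}\,Y'_b$, lies in $hR$.

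Finally, apply the ring-theoretic fact \eqref{eq:modprop} twice, once per coordinate: for $f\in R[z_1,\dots,z_N]$, if $u-v\in hR$ then $f(\dots,u,\dots)-f(\dots,v,\dots)\in hR$. The reason is that $u^k-v^k=(u-v)(u^{k-1}+\dots+v^{k-1})$ and all other coordinates are elements of $R$. Chaining the two substitutions gives $F(q^\lambda)-F(q^{\tilde\lambda})\in hR$.

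The only point needing care, and the expected main obstacle, is that the congruence is nondegenerate: $h$ must not be a unit in $R$, i.e.\ $hR\neq R$. The units of $R$ are the scalar multiples of monomials $q^it^j$. Since $B=b-a\ge1$, the element $1-q^At^B$ is a genuine binomial and not a unit, just as $h=\alpha x+y$ was not a unit in the Jack case. Strictly speaking, membership in $hR$ holds regardless; this check only ensures the statement is meaningful. The remaining steps are direct transcriptions of the Jack argument.
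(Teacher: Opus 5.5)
Your proposal is correct and takes the same route as the paper, whose proof is simply ``identical to the Jack case'': truncate to finitely many variables, swap rows $a,b$ by symmetry, apply \eqref{eq:modprop} coordinatewise, and note that $h=1-q^At^B$ with $B>0$ is not a unit of $\QQ[q^{\pm1},t^{\pm1}]$. You go slightly further than the paper by spelling out how the multiplicative relations $Y_a(q^\lambda)=(1-h)Y_b(q^{\tilde\lambda})$ become additive differences lying in $hR$, and you rightly observe that the non-unit check only makes the congruence nontrivial and is not needed for the divisibility itself.
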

\begin{proof}
Identical to the Jack case. Since $B>0$, the binomial $h=1-q^At^B$ is
not a Laurent monomial and hence is not a unit of $\QQ[q^\pm,t^\pm]$.
\end{proof}

Define the $(q,t)$-hook factors
\[
c_\lambda(s)=1-q^{\arm_\lambda(s)}t^{\leg_\lambda(s)+1},
\qquad
c'_\lambda(s)=1-q^{\arm_\lambda(s)+1}t^{\leg_\lambda(s)}
\]
and
\[
c_\lambda(q,t)=\prod_{s\in\lambda}c_\lambda(s),
\qquad
c'_\lambda(q,t)=\prod_{s\in\lambda}c'_\lambda(s),
\]
and it can be shown that $j_\lambda := \langle J_\lambda,J_\lambda\rangle_{q,t}=c_\lambda c'_\lambda$.

A shifted-symmetric generalization of Macdonald polynomials was introduced in \cite{Knop:1996,Sahi:1996aa,Knop:1997ab,Okounkov:1998aa}\footnote{These are also known as interpolation polynomials.}.
For each partition $\mu$, there exists a unique $(q,t)$-shifted symmetric function  $\Js_\mu(x;q,t)$ of degree $|\mu|$ such that 
\begin{enumerate}
    
   \item (Vanishing) For $\lambda\ne\mu$ with $|\lambda|\leq|\mu|$,
$$\Js_\mu(q^\lambda)=0$$
   (equivalently, by the extra-vanishing property \cite[Eq.~(4.3), Prop.~4.6]{Okounkov:1998aa}, $\Js_\mu(q^\lambda)=0$ unless $\mu\subseteq\lambda$).
\item (Integral Normalization) (c.f. def. \eqref{def:n})
    $$\Js_\mu(q^\mu)=(-1)^{|\mu|}q^{n(\mu')}t^{-2n(\mu)}\normJ_\mu.$$
\end{enumerate}
Moreover, the top homogeneous part of $J^*_\mu$ is $\J_\mu(Y(x))$.  Knop \cite{Knop:1997ab} shows that the shifted Macdonald functions are Laurent $\Js_\mu \in \Lamsl$.

The Macdonald Littlewood-Richardson constants $c^\lambda_{\mu\nu}\in\QQ(q,t)
$ are defined by
\[
J_\mu J_\nu=\sum_\lambda c^\lambda_{\mu\nu}(q,t)J_\lambda. \]
We consider their generalization to the \emph{shifted} Macdonald Littlewood-Richardson coefficients defined as
\[
\Js_\mu\Js_\nu=\sum_\lambda d^\lambda_{\mu\nu}(q,t)\,\Js_\lambda,
\qquad d^\lambda_{\mu\nu}\in\QQ(q,t),
\]
supported on $|\lambda|\le|\mu|+|\nu|$.  Exactly as in the Jack case, where
$g^\lambda_{\mu\nu}:=H_\lambda H'_\lambda\,d^\lambda_{\mu\nu}$, the normalized
coefficient appearing in the congruence is defined to be
\[
g_{\mu\nu}^{\lambda}(q,t):=\langle J_\lambda,J_\lambda\rangle_{q,t}\,d^\lambda_{\mu\nu}(q,t)
=c_\lambda c'_\lambda\,d^\lambda_{\mu\nu}(q,t).
\]

\subsection{New features in the Macdonald case}

Two features distinguish the Macdonald case from the Jack case.  The first is
the diagonal evaluation factor $u_\lambda$.  In the integral normalization above,
\[
\Js_\lambda(q^\lambda;q,t)
=u_\lambda\,c_\lambda c'_\lambda,\qquad
u_\lambda=(-1)^{|\lambda|}t^{-2n(\lambda)}q^{n(\lambda')}.
\]
Unlike the Jack diagonal value, $u_\lambda$
is a genuine $q,t$-monomial prefactor that must be carried through every pivot
congruence. Its behaviour under a pivot is controlled by the following lemma.

The second, and more substantial, feature is that the power-sum metric \eqref{def:metricalpha} is no
longer Laurent on the diagonal. The entries $\langle p_\rho,p_\rho\rangle_{q,t}$
acquire genuine $q,t$-denominators.  This obstructs the adjoint argument behind
the skew-Laurent theorem (Lemma~\ref{thm:skew-laurent}), so the input-pivot
case rests on the additional Conjecture~\ref{conj:mac-skew-laurent} below.

\begin{lemma}\label{lem:macdonald-n-shift}
For a pivot $\lambda\sim_p\tilde\lambda$ with hook $h=1-q^A t^B$, one has
\[
n(\tilde\lambda)-n(\lambda)=B,\qquad
n(\tilde\lambda')-n(\lambda')=-A.
\]
\end{lemma}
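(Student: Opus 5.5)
The plan is a direct computation from the definition \eqref{def:n}. Both quantities are additive over rows, and only rows $a$ and $b$ change, so I only need to track those two rows. Throughout I use $\tilde\lambda_a=\lambda_a-1$ and $\tilde\lambda_b=\lambda_b+1$, with all other parts unchanged, exactly as in the proof of Lemma~\ref{lem:shifted-coordinate-pivot}.

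For $n(\lambda)$, I read off the definition $n(\lambda)=\sum_i(i-1)\lambda_i$ directly. Removing a box from row $a$ contributes $-(a-1)$ and adding a box to row $b$ contributes $+(b-1)$. The total change is therefore $b-a$, which is $B$.

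For $n(\lambda')$, I first rewrite it in terms of rows. Summing over boxes column by column gives
\[
n(\lambda')=\sum_j (j-1)\lambda'_j=\sum_{(i,j)\in\lambda}(j-1)=\sum_i\binom{\lambda_i}{2}.
\]
A row of length $r$ contributes $\binom{r}{2}$, and changing $r$ by $\pm1$ changes this as follows:
\begin{itemize}
\item $\binom{r-1}{2}-\binom{r}{2}=-(r-1)$, so row $a$ contributes $-(\lambda_a-1)$;
\item $\binom{r+1}{2}-\binom{r}{2}=r$, so row $b$ contributes $+\lambda_b$.
\end{itemize}
The total change is $\lambda_b-\lambda_a+1=-(\lambda_a-\lambda_b-1)=-A$.

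There is no real obstacle here. The only point needing care is the identity $n(\lambda')=\sum_i\binom{\lambda_i}{2}$, which turns the column statistic into a row statistic. Once that is in place, both identities reduce to one-line differences in the two affected rows.
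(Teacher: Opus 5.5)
Your proposal is correct and matches the paper's proof essentially step for step. You compute the row-$a$/row-$b$ change for $n(\lambda)$ directly, rewrite $n(\lambda')=\sum_i\binom{\lambda_i}{2}$, and take the two binomial differences to obtain $\lambda_b-\lambda_a+1=-A$.
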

\begin{proof}
The pivot removes one box from row $a$ and adds one box to row $b>a$.  Hence
\[
n(\tilde\lambda)-n(\lambda)=-(a-1)+(b-1)=b-a=B.
\]
Summing $n(\lambda)=\sum_i(i-1)\lambda_i$ column-by-column gives
$n(\lambda')=\sum_i \binom{\lambda_i}{2}$.  Only
rows $a$ and $b$ change, so
\[
n(\tilde\lambda')-n(\lambda')
=\binom{\lambda_a-1}{2}-\binom{\lambda_a}{2}
 +\binom{\lambda_b+1}{2}-\binom{\lambda_b}{2}
=-\lambda_a+1+\lambda_b=-A.
\]
Here $A=\lambda_a-\lambda_b-1$ and $B=b-a$ are the pivot data fixed above.
\end{proof}

Consequently
\[
\frac{u_{\tilde\lambda}}{u_\lambda}
=q^{-A}t^{-2B}
\equiv t^{-B}
=\frac{t^{-n(\tilde\lambda)}}{t^{-n(\lambda)}}
\pmod h,
\]
so the congruence of Proposition~\ref{prop:mac-top-output-pivots} below is
equivalent to
\begin{equation}
t^{-n(\lambda)} g_{\mu\nu}^{\lambda}(q,t)
\equiv
t^{-n(\tilde\lambda)} g_{\mu\nu}^{\tilde\lambda}(q,t)
\pmod h.
\end{equation}

\subsection{Extension of the proof}

To extend the Jack proof to the Macdonald case, we first extend Lassalle's shift transform.
In the $(q,t)$-case, the extended shift transform itself is due to Lassalle, who introduced it implicitly
through his generalized Macdonald binomial coefficients \cite{Lassalle:1999aa}.  The
defining assignment
\[
\operatorname{sh}_{q,t}(J_\lambda)=\Js_\lambda
\]
extends linearly to a bijection from the Macdonald basis of $\Lambda$ onto the
shifted Macdonald basis of $\Lambda^*$, and hence to an isomorphism over the
field $\QQ(q,t)$.  Over this field the map is well understood, Ben
Dali--D'Adderio \cite{Dali:2026aa} give an explicit formula for it and confirm that it is an
isomorphism.

\begin{lemma}[Lassalle \cite{Lassalle:1999aa}, Ben Dali--D'Adderio {\cite{Dali:2026aa}}]
The shift map is a $\QQ(q,t)$-linear isomorphism
\[
\operatorname{sh}_{q,t}\colon
\Lamf\;\xrightarrow{\ \sim\ }\;\Lamsf.
\]
\end{lemma}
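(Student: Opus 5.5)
The plan is to show that $\operatorname{sh}_{q,t}$, defined on the basis $\{J_\lambda\}$ by $\operatorname{sh}_{q,t}(J_\lambda)=\Js_\lambda$ and extended $\QQ(q,t)$-linearly, is well defined, injective and surjective. All three follow from a unitriangularity argument with respect to the degree filtration, as in the proof of Corollary~\ref{cor:jack-laurent-iso}, except that here we work over the field $\QQ(q,t)$, so no lattice questions arise.

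\textbf{Well-definedness.} Since the integral Macdonald functions $\{J_\lambda\}$ form a $\QQ(q,t)$-basis of $\Lamf$, linear extension defines a unique $\QQ(q,t)$-linear map $\Lamf\to\Lamsf$. Its target is correct because $\Js_\lambda\in\Lamsl\subseteq\Lamsf$, which is Knop's Laurent result quoted above.

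\textbf{Bijectivity.} It suffices to show that $\{\Js_\lambda\}$ is a $\QQ(q,t)$-basis of $\Lamsf$. Filter $\Lamsf$ by degree: $\Lamsf^{\le k}$ is spanned by the $P^\circ_\kappa$ with $|\kappa|\le k$. Taking the top homogeneous component in the variables $Y$ identifies $\operatorname{gr}_k\Lamsf$ with the degree-$k$ part of $\Lamf$, via $P^\circ_\kappa\mapsto p_\kappa(Y)$, because $p^\circ_k=p_k(Y)-p_k(Y(1))$ has top part $p_k(Y)$.
\begin{itemize}
\item Each $\Js_\lambda$ has degree $|\lambda|$ and top part $J_\lambda(Y)$.
\item The images of the $\Js_\lambda$ with $|\lambda|=k$ in $\operatorname{gr}_k$ are therefore the $J_\lambda$, which form a basis of the degree-$k$ part.
\end{itemize}
By the standard filtered-module argument, induction on $k$ shows that $\{\Js_\lambda:|\lambda|\le k\}$ is a basis of $\Lamsf^{\le k}$. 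Equivalently, in the bases $\{J_\lambda\}$ and $\{\Js_\lambda\}$, the map $\operatorname{sh}_{q,t}$ composed with the associated-graded identification is block unitriangular, with identity blocks on each degree. Hence $\operatorname{sh}_{q,t}$ sends $\Lamf^{\le k}$ isomorphically onto $\Lamsf^{\le k}$ for every $k$, and so is an isomorphism.

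\textbf{Main obstacle.} The delicate point is justifying that $\{P^\circ_\kappa\}$ really is a basis of $\Lamsf$, so that the associated graded of $\Lamsf$ is $\Lamf$. This requires that every shifted symmetric function has a top component that is an honest symmetric function in $Y$. That follows by stabilizing in $N$ variables: in finitely many variables, shifted symmetric polynomials are just symmetric polynomials in $Y_1,\dots,Y_N$, and the subtraction of $p_k(Y(1))$ makes the $p^\circ_k$ compatible with the restriction maps.

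Finally, the explicit formula of Ben Dali--D'Adderio is not needed for bijectivity. It only confirms that the map agrees with Lassalle's binomial-coefficient definition: the binomial expansion $\Js_\mu(q^\lambda)$ pairs with $J_\lambda$ in the same way as \eqref{eq:lassalle-shift}.
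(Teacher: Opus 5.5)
Your proposal is correct and takes the paper's route. The paper justifies the lemma only by observing that $J_\lambda\mapsto\Js_\lambda$ carries the Macdonald basis of $\Lamf$ onto the shifted Macdonald basis of $\Lamsf$, citing Lassalle and Ben Dali--D'Adderio; your top-degree filtration argument fills in why $\{\Js_\lambda\}$ is a $\QQ(q,t)$-basis, in the same unitriangular spirit as Corollary~\ref{cor:jack-laurent-iso}, and your closing remark linking the map to \eqref{eq:lassalle-shift} is unnecessary (that formula is stated only in the Jack setting) and can be dropped.
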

 
Whether $\operatorname{sh}_{q,t}$ also respects the integral \emph{Laurent}
lattice is a strictly
finer question, and we lack a Knop-Sahi type result in this case (Thm.~\eqref{thm:knopsahi-laurent}).

\begin{conjecture}\label{conj:mac-laurent-iso}
The shift map restricts to an isomorphism of $\QQ[q^{\pm1},t^{\pm1}]$-lattices
\[
\operatorname{sh}_{q,t}\colon
\Laml\;\xrightarrow{\ \sim\ }\;\Lamsl.
\]
\end{conjecture}

We discuss the status of this conjecture in the final Section ~\ref{sec:status}. 
Granting it, the Jack proof of Theorem~\ref{thm:top-pivots} for output pivots transfers
\emph{verbatim} to the Macdonald case.

\subsubsection{Output pivots}

\begin{proposition}[Macdonald output pivots]\label{prop:mac-top-output-pivots}
Assume Conjecture~\ref{conj:mac-laurent-iso}.  Let
$\lambda\sim_p\tilde\lambda$ be a one-box output pivot with associated common hook $h$.  Then
\[
u_{\lambda} g_{\mu\nu}^{\lambda}(q,t)
\equiv
u_{\tilde\lambda} g_{\mu\nu}^{\tilde\lambda}(q,t)
\pmod h.
\]
\end{proposition}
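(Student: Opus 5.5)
The plan is to repeat the proof of Proposition~\ref{prop:output-pivots} in the $(q,t)$ setting. The only changes are that $\sh$ is replaced by $\operatorname{sh}_{q,t}$, and the diagonal value $\Js_\lambda(q^\lambda)=u_\lambda c_\lambda c'_\lambda$ now enters the evaluation.

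First, set
\[
\Phi_{\mu\nu}:=\operatorname{sh}_{q,t}^{-1}(\Js_\mu\Js_\nu)=\sum_{\rho} d^\rho_{\mu\nu}(q,t)\,J_\rho .
\]
By Knop's result, $\Js_\mu,\Js_\nu\in\Lamsl$, so their product lies in $\Lamsl$. Conjecture~\ref{conj:mac-laurent-iso} then gives $\Phi_{\mu\nu}\in\Laml$. The ordinary ring is graded and each $J_\rho$ is homogeneous of degree $|\rho|$. Hence the degree-$k$ component
\[
\Phi^{(k)}_{\mu\nu}=\sum_{|\rho|=k}d^\rho_{\mu\nu}J_\rho ,\qquad k=|\lambda|,
\]
still lies in $\Laml$. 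Applying the conjecture once more, $F:=\operatorname{sh}_{q,t}(\Phi^{(k)}_{\mu\nu})\in\Lamsl$.

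Next, I evaluate $F$ without any Lassalle-type pairing formula. Since $\operatorname{sh}_{q,t}(J_\rho)=\Js_\rho$, we have $F=\sum_{|\rho|=k}d^\rho_{\mu\nu}\Js_\rho$. The vanishing property kills every term with $\rho\neq\lambda$ and $|\rho|=k=|\lambda|$. Therefore
\[
F(q^\lambda)=d^\lambda_{\mu\nu}\,u_\lambda c_\lambda c'_\lambda=u_\lambda\, g^\lambda_{\mu\nu}(q,t),
\]
and the same computation at $\tilde\lambda$ gives $F(q^{\tilde\lambda})=u_{\tilde\lambda}g^{\tilde\lambda}_{\mu\nu}$, because $|\tilde\lambda|=k$.

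Finally, Lemma~\ref{lem:macdonald-universal} applied to $F$ at the pivot $\lambda\sim_p\tilde\lambda$ yields $u_\lambda g^\lambda_{\mu\nu}\equiv u_{\tilde\lambda}g^{\tilde\lambda}_{\mu\nu}\pmod h$, which is the claim. No unit needs to be cancelled, since the monomials $u_\lambda,u_{\tilde\lambda}$ remain in the statement. This is simpler than the Jack case, where $\al^{-k}$ had to be removed.

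The main obstacle is Laurentness, and that is exactly what Conjecture~\ref{conj:mac-laurent-iso} supplies. Without it, $\Phi^{(k)}_{\mu\nu}$ could carry $(q,t)$-hook denominators, and Lemma~\ref{lem:macdonald-universal} would not apply. The only point to check carefully is that "homogeneous component" is well defined for $J_\rho$ in $\Laml$. This holds because $\Laml$ is graded with a basis (for instance the power sums or monomials) over $\QQ[q^{\pm1},t^{\pm1}]$, and taking a graded component is coefficientwise.
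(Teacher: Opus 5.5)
Your proposal is correct and follows the paper's proof step for step. You pull $\Js_\mu\Js_\nu$ back by $\operatorname{sh}_{q,t}^{-1}$, use Conjecture~\ref{conj:mac-laurent-iso} to land in $\Laml$, take the degree-$|\lambda|$ component, and push it forward to a Laurent $F$; evaluating $F$ at $q^\lambda$ and $q^{\tilde\lambda}$ via the vanishing property gives $u_\lambda g^\lambda_{\mu\nu}$ and $u_{\tilde\lambda}g^{\tilde\lambda}_{\mu\nu}$, and Lemma~\ref{lem:macdonald-universal} finishes the argument. Your explicit appeal to Knop's Laurentness of $\Js_\mu$, and your remark that graded components are taken coefficientwise, only spell out what the paper leaves implicit.
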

\begin{proof}

Since
$\operatorname{sh}_{q,t}$ is an isomorphism over $\QQ(q,t)$, every product
$J^\#_\mu J^\#_\nu$ pulls back to an ordinary symmetric function
\[
\Phi_{\mu\nu}:=\operatorname{sh}_{q,t}^{-1}(J^\#_\mu J^\#_\nu)
=\sum_\rho d^\rho_{\mu\nu}(q,t)J_\rho\in\Lamf,
\]
and Conjecture~\ref{conj:mac-laurent-iso} states precisely that these are
Laurent, i.e. $\Phi_{\mu\nu}\in\Laml$. We can then use the grading in this ring to extract the pure degree $k$ component $\Phi_{\mu\nu}^{(k)}$, precisely as in the Jack output-pivot argument. 
\[
F^{(k)}_{\mu\nu}:=\sh_{q,t}(\Phi_{\mu\nu}^{(k)})
=\sum_{\rho\vdash k}d^\rho_{\mu\nu}(q,t)\Js_\rho .
\]
Conjecture~\ref{conj:mac-laurent-iso}
gives $F^{(k)}_{\mu\nu}\in\Lamsl$, so
Lemma~\ref{lem:macdonald-universal} gives
$F^{(k)}_{\mu\nu}(q^\lambda)\equiv F^{(k)}_{\mu\nu}(q^{\tilde\lambda})\pmod h$. On this graded piece, the vanishing property leaves only the single
term
\[
F^{(k)}_{\mu\nu}(q^\lambda)=d^\lambda_{\mu\nu}(q,t)\,\Js_\lambda(q^\lambda;q,t)
=u_\lambda\,g_{\mu\nu}^{\lambda}(q,t)
\]
and the claim follows.
\end{proof}

\begin{example}
Take $\mu=\nu=(2,1)$, $\lambda=(3,2,1)$, and
$\tilde\lambda=(2,2,1,1)$.  Then
\[
n(\mu)=n(\nu)=1,\qquad n(\lambda)=4,\qquad n(\tilde\lambda)=7.
\]
Direct computation of the Macdonald coefficients gives
\begin{eqnarray*}
t^{-n(\lambda)}g_{\mu\nu}^{\lambda}(q,t)
&=& t^{-4} (t - 1)^4(q - 1)^4(qt^2 - 1)(q^2t - 1) \times \\
&&\bigl(2q^5t^5 + q^5t^4 + q^4 t^5 - q^5 t^3 + 4q^4t^4
- q^3t^5 - q^4t^3 - q^3t^4 \\
&&\quad -3q^4t^2 + 4q^3t^3 - 3q^2t^4 - q^4t - q^3t^2
- q^2t^3 - qt^4 - 3q^3t \\
&&\quad +4q^2t^2 - 3qt^3 - q^2t - qt^2 - q^2 + 4qt - t^2 + q + t + 2\bigr),
\end{eqnarray*}
and
\[
t^{-n(\tilde\lambda)}g_{\mu\nu}^{\tilde\lambda}(q,t)
=t^{-7}(t+1)^2(t-1)^4(q-1)^4(q^2t-1)^2(qt^3-1)(qt^4-1).
\]
For the pivot hook
\[
h_{321}(0,0)=h'_{2211}(0,0)=1-q^2t^3,
\]
one checks that
\[
t^{-n(\lambda)}g_{\mu\nu}^{\lambda}(q,t)
\equiv
t^{-n(\tilde\lambda)}g_{\mu\nu}^{\tilde\lambda}(q,t)
\pmod{1-q^2t^3}.
\]
Equivalently, at the specialization $q=s^3,\ t=s^{-2}$ where $h=0$, we find
\begin{eqnarray*}
(s^{-2})^{-n(\lambda)}g_{\mu\nu}^{\lambda}(s^3,s^{-2})
&=&(s^{-2})^{-n(\tilde\lambda)}g_{\mu\nu}^{\tilde\lambda}(s^3,s^{-2})\\
&=&s^{-6}(1-s^2)^{2}(1-s^4)^{4}(1-s^3)^5(1-s^5).\\
\end{eqnarray*}
\end{example}

\subsubsection{Input pivots}
As before, for an \emph{input} pivot we keep $\lambda,\mu$ fixed and pivot in $\nu$.  As in
the Jack case the relevant object is the adjoint of the multiplication operator
$L_\mu(f):=\operatorname{sh}_{q,t}^{-1}(J^\#_\mu\operatorname{sh}_{q,t}(f))$,
which satisfies
$L_\mu(J_\tau)=\Phi_{\mu\tau}=\sum_\rho d^\rho_{\mu\tau}(q,t)J_\rho$.  Define
$\Psi_{\lambda/\mu}\in\Lamf$ by the adjunction
\begin{equation}\label{eq:mac-skew-sum-def}
\langle\Psi_{\lambda/\mu},J_\tau\rangle_{q,t}
=\langle J_\lambda,L_\mu(J_\tau)\rangle_{q,t}
=g^\lambda_{\mu\tau}
\qquad\text{for every }\tau ,
\end{equation}
the last equality from $\langle J_\rho,J_\rho\rangle_{q,t}=c_\rho c'_\rho$ and
$g^\lambda_{\mu\tau}=c_\lambda c'_\lambda\,d^\lambda_{\mu\tau}$.  Equivalently
\[
\Psi_{\lambda/\mu}=L_\mu^\dagger(J_\lambda)
=\sum_\tau d^\lambda_{\mu\tau}(q,t)\frac{c_\lambda c'_\lambda}{c_\tau c'_\tau}\,J_\tau ,
\]
a finite sum by shifted triangularity and vanishing.

The argument of Lemma~\ref{thm:skew-laurent} does \emph{not} transfer for the following reasons.  Expand
$\Psi_{\lambda/\mu}=\sum_\rho b_\rho p_\rho$ in power sums. Since the power sums
are orthogonal, the adjunction \eqref{eq:mac-skew-sum-def}, extended linearly,
determines each coefficient through
\[
b_\rho\,\langle p_\rho,p_\rho\rangle
=\langle J_\lambda,L_\mu(p_\rho)\rangle .
\]
In the Jack case the power-sum entries
$\langle p_\rho,p_\rho\rangle_\al=z_\rho\al^{\ell(\rho)}$ are units in $\QQ[\alpha^\pm]$. In the Macdonald case we cannot so easily conclude that $b_\rho\in \QQ[\alpha^\pm]$. Rather, the power-sum norms
\[
\langle p_\rho,p_\rho\rangle_{q,t}
=z_\rho\prod_i\frac{1-q^{\rho_i}}{1-t^{\rho_i}}\in\QQ(q,t)
\]
are not elements of $\QQ[q^{\pm},t^{\pm}]$.  However, we make two conjectures here. The first is exactly
what the input-pivot congruence requires.

\begin{conjecture}\label{conj:mac-skew-laurent}
For all $\mu\subseteq\lambda$,
\[
\Psi_{\lambda/\mu}:=L_\mu^\dagger(J_\lambda)
=\sum_\tau
d^\lambda_{\mu\tau}(q,t)
\frac{c_\lambda c'_\lambda}{c_\tau c'_\tau}\,J_\tau
\in\Laml .
\]
\end{conjecture}
With this, we can define the shifted skew Macdonald function as
\[ \Js_{\lambda/\mu} := \sh(\Psi_{\lambda/\mu}) \in \Lamsl. \]

Secondly, we claim the stronger result that the adjoint operator preserves the entire Laurent
ring.

\begin{conjecture}\label{conj:mac-adjoint-laurent}
For all $\mu$, the adjoint $L_\mu^\dagger$ preserves $\Laml$. Equivalently
$L_\mu^\dagger(p_\rho)\in\Laml$ for every $\rho$.
\end{conjecture}

This is the Macdonald analogue of the skew-Laurent theorem used for input
pivots in the Jack case.  It is strictly stronger than
Conjecture~\ref{conj:mac-skew-laurent}. Since $J_\lambda\in\Laml$, having
$L_\mu^\dagger$ preserve $\Laml$ immediately yields
$\Psi_{\lambda/\mu}=L_\mu^\dagger(J_\lambda)\in\Laml$.  In the Jack case the
analogous statement holds because the power-sum metric \eqref{def:metricalpha} is Laurent.

\begin{example}
Take $\lambda=(2,1)$ and $\mu=(1)$.  The shifted product coefficients
appearing in the skew sum include
\[
d^{(2,1)}_{(1),(2)}
=\frac{(q-1)(q+1)}{q^2t-1},\qquad
d^{(2,1)}_{(1),(1,1)}
=\frac{(t-1)(t+1)}{qt^2-1},
\]
and
\[
d^{(2,1)}_{(1),(2,1)}
=-t^{-1}(t-1)(q-1)(qt+t+1).
\]
Thus the $J$-basis expression for $\Psi_{(2,1)/(1)}$ has apparent
non-Laurent denominators:
\[
[J_{(2)}]\Psi_{(2,1)/(1)}
=\frac{(t-1)(q-1)(qt^2-1)}{qt-1},
\]
\[
[J_{(1,1)}]\Psi_{(2,1)/(1)}
=\frac{(t-1)(q-1)(q^2t-1)}{qt-1}.
\]
However these denominators cancel after expanding in the ordinary monomial
symmetric basis:
\begin{eqnarray*}
\Psi_{(2,1)/(1)}
&=&(q-1)(t-1)^3(2qt+q+t+2)m_{(1,1)}\\
&&+(q-1)(t-1)^2(qt^2-1)m_{(2)}\\
&&+t^{-1}(q-1)(t-1)^4(qt+t+1)(2qt+q+t+2)m_{(1,1,1)}\\
&&+t^{-1}(q-1)(t-1)^3(qt+t+1)(qt^2-1)m_{(2,1)}.
\end{eqnarray*}
All coefficients lie in $R=\QQ[q^{\pm1},t^{\pm1}]$.  This gives a small
example of the skew Laurent cancellation predicted by
Conjecture~\ref{conj:mac-skew-laurent}.
\end{example}

\begin{proposition}[Input pivots]\label{prop:mac-input-pivots}
Assume Conjecture~\ref{conj:mac-skew-laurent}, and let $\nu\sim_p\tilde\nu$ be a
one-box input pivot with hook $h=1-q^At^B$ and $|\nu|=|\tilde\nu|=m$.  Then
\[
u_\nu\,g_{\mu\nu}^{\lambda}(q,t)
\equiv
u_{\tilde\nu}\,g_{\mu\tilde\nu}^{\lambda}(q,t)
\pmod h .
\]
\end{proposition}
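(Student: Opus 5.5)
The plan mirrors the Jack input-pivot argument (Proposition~\ref{prop:input-pivots}), with the diagonal factor $u_\nu$ tracked explicitly.

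\textbf{Step 1: Laurent degree-$m$ component.} By Conjecture~\ref{conj:mac-skew-laurent}, $\Psi_{\lambda/\mu}\in\Laml$. Since each $J_\tau$ is homogeneous of degree $|\tau|$, the homogeneous degree-$m$ component
\[
\Psi^{(m)}_{\lambda/\mu}=\sum_{|\tau|=m} \frac{g^\lambda_{\mu\tau}}{c_\tau c'_\tau}\,J_\tau
\]
also lies in $\Laml$. Projecting onto a graded piece preserves the Laurent lattice of the graded ring $\Laml$.

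\textbf{Step 2: transport to the shifted side.} Set $F:=\sh_{q,t}(\Psi^{(m)}_{\lambda/\mu})=\sum_{|\tau|=m} \frac{g^\lambda_{\mu\tau}}{c_\tau c'_\tau}\Js_\tau$. We need $F\in\Lamsl$. This is the point where Conjecture~\ref{conj:mac-skew-laurent} alone seems insufficient: sending a Laurent ordinary function to a Laurent shifted one needs the forward half of Conjecture~\ref{conj:mac-laurent-iso}. I would first try to avoid that half, but I do not see how. The Jack proof implicitly used Corollary~\ref{cor:jack-laurent-iso} at this step, so I would state the proposition as also relying on (the forward direction of) Conjecture~\ref{conj:mac-laurent-iso}. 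Alternatively, one reads Conjecture~\ref{conj:mac-skew-laurent} together with the definition $\Js_{\lambda/\mu}:=\sh(\Psi_{\lambda/\mu})\in\Lamsl$ as already asserting this for each graded slice. This is the step I expect to be the main obstacle. Everything else is formal.

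\textbf{Step 3: evaluate at $\nu$.} Evaluate $F$ at $q^\nu$ with $|\nu|=m$. By the vanishing property, $\Js_\tau(q^\nu)=0$ for $|\tau|=m$, $\tau\neq\nu$, so only the term $\tau=\nu$ survives:
\[
F(q^\nu)=\frac{g^\lambda_{\mu\nu}}{c_\nu c'_\nu}\,\Js_\nu(q^\nu)=\frac{g^\lambda_{\mu\nu}}{c_\nu c'_\nu}\,u_\nu c_\nu c'_\nu=u_\nu\,g^\lambda_{\mu\nu}.
\]
In the same way, $F(q^{\tilde\nu})=u_{\tilde\nu}\,g^\lambda_{\mu\tilde\nu}$.

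\textbf{Step 4: apply the pivot congruence.} Apply Lemma~\ref{lem:macdonald-universal} to $F\in\Lamsl$ at the pivot $\nu\sim_p\tilde\nu$. This gives
\[
u_\nu\,g^\lambda_{\mu\nu}\equiv u_{\tilde\nu}\,g^\lambda_{\mu\tilde\nu}\pmod h,
\]
which is the claim.

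\textbf{Consequence.} Using Lemma~\ref{lem:macdonald-n-shift} applied to $\nu$, we have $u_{\tilde\nu}/u_\nu\equiv t^{-B}=t^{-n(\tilde\nu)}/t^{-n(\nu)}\pmod h$. Since $u_\nu$ is a unit, the result can be restated in the $t^{-n(\sigma)}$ normalization of Conjecture~\ref{conj:macdonald-pivots}.
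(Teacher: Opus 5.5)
Your proof is correct and is essentially the paper's argument. The paper likewise transports $\Psi_{\lambda/\mu}$ via $\sh_{q,t}$, evaluates using the vanishing property at size $m$, and explicitly invokes Conjecture~\ref{conj:mac-laurent-iso} at exactly your Step 2, even though the statement names only Conjecture~\ref{conj:mac-skew-laurent}; so your diagnosis of the missing hypothesis is right, and that forward direction, not your alternative reading, is what closes the step. Your explicit passage to the degree-$m$ slice $\Psi^{(m)}_{\lambda/\mu}$ before evaluating is the step the paper leaves implicit by saying the Jack argument applies verbatim.
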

\begin{proof}
This is the Jack input-pivot argument verbatim.  By
Conjecture~\ref{conj:mac-skew-laurent}, $\Psi_{\lambda/\mu}\in\Laml$,
so $\operatorname{sh}_{q,t}(\Psi_{\lambda/\mu})\in\Lamsl$ by
Conjecture~\ref{conj:mac-laurent-iso}.  Applying
Lemma~\ref{lem:macdonald-universal} and the vanishing property at partitions
of size $m$ gives
\[
u_\nu\,d^\lambda_{\mu\nu}(q,t)\,c_\lambda c'_\lambda
\equiv
u_{\tilde\nu}\,d^\lambda_{\mu\tilde\nu}(q,t)\,c_\lambda c'_\lambda
\pmod h ,
\]
which is the claim since $g_{\mu\nu}^{\lambda}=c_\lambda c'_\lambda\,d^\lambda_{\mu\nu}$
by definition.
\end{proof}

As before, this takes the modified form
\[
t^{-n(\nu)}g_{\mu\nu}^{\lambda}(q,t)
\equiv
t^{-n(\tilde\nu)}g_{\mu\tilde\nu}^{\lambda}(q,t)
\pmod h .
\]

\begin{example}
Take $\lambda=(3,3,1), \mu=(2,1)$ and consider the input pivot
$\nu=(3,1)\sim_p\tilde\nu=(2,2).$
Here $n(\nu)=1, n(\tilde\nu)=2$,
and the pivot hook is
\[
h=1-qt.
\]
Direct computation gives
\begin{eqnarray*}
g_{\mu\nu}^{\lambda}(q,t)
&=&(t+1)(q+1)^2(t-1)^4(q-1)^5(qt^2-1)^2(q^3t-1)^2(q^2t^3-1),
\end{eqnarray*}
while
\begin{eqnarray*}
g_{\mu\tilde\nu}^{\lambda}(q,t)
&=&(t+1)^2(q+1)^2(q-1)^4(t-1)^5(qt^2-1)(q^2t-1)^2(q^3t-1)(q^2t^3-1).
\end{eqnarray*}
Thus the modified input-pivot congruence reads
\[
t^{-1}g_{(2,1),(3,1);(3,3,1)}(q,t)
\equiv
t^{-2}g_{(2,1),(2,2);(3,3,1)}(q,t)
\pmod{1-qt}.
\]
Let
\[
L=t^{-1}g_{(2,1),(3,1);(3,3,1)}(q,t),\qquad
R=t^{-2}g_{(2,1),(2,2);(3,3,1)}(q,t).
\]
A direct simplification gives the identity
\[
L-R=hf,
\]
where $f\in \QQ[q^\pm,t^\pm]$ is the Laurent polynomial
\begin{eqnarray*}
f
&=&-t^{-2}(t+1)(q+1)^2(q-1)^4(t-1)^4(qt^2-1)(q^3t-1)(q^2t^3-1)\\
&&\cdot\bigl(q^4t^3-2q^3t^3+q^3t^2-2q^2t^2+2q^2t
+qt^2-2qt+t^2+t-1\bigr).
\end{eqnarray*}
\end{example}

\section{Status of the Laurent-lattice conjecture}\label{sec:status}

To make the claim concrete, expand the Macdonald characters
$p^*_\mu=\operatorname{sh}_{q,t}(p_\mu)$ in the stable shifted power-sum basis
$\{P^\circ_\kappa\}$ of $\Lamsl$ introduced above, and let
\[
b_{\mu\nu}:=[P^\circ_\nu]\,\operatorname{sh}_{q,t}(p_\mu)
\]
be the resulting coefficient matrix, where $\nu,\mu$ range over partitions of
size at most $n$ and $P(n)=\sum_{k=1}^{n}p(k)$ is the number of such partitions.

Since the top homogeneous component of $p^*_\mu=\operatorname{sh}_{q,t}(p_\mu)$
is $p_\mu$ and $P^\circ_\mu=p_\mu+(\text{lower degree})$, we have
$p^*_\mu=P^\circ_\mu+\sum_{|\kappa|<|\mu|}b_{\mu\kappa}P^\circ_\kappa$, so
$b$ is unitriangular with respect to degree (all diagonal entries equal to
$1$).  In particular $\det b=1$, so $b$ is invertible with $b^{-1}$ Laurent as
long as $b$ itself is Laurent.  Hence Conjecture~\ref{conj:mac-laurent-iso}
holds if and only if $b$ has Laurent entries, i.e.\ $b\in
\mathrm{SL}_{P(n)}\bigl(\QQ[q^{\pm1},t^{\pm1}]\bigr)$ for every $n$.

A first step toward this is the following integrality theorem of Knop, where
$M^\circ_\nu$ denotes the monomial symmetric function in the shifted
coordinates (the de-shifted ordinary monomial $m_\nu$).
\begin{theorem}[Knop {\cite{Knop:1997ab}}] 
The coefficient
\[
a_{\mu\nu}:=[M^\circ_\nu]\,\operatorname{sh}_{q,t}(J_\mu) \in \QQ(q,t)
\]
is Laurent, i.e. $a_{\mu\nu} \in \QQ[q^\pm,t^\pm]$. In fact $a_{\mu\nu} \in \QQ[q,t^\pm]$.
\end{theorem}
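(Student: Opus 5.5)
The coefficients $a_{\mu\nu}$ are the coefficients of $\Js_\mu=\sh_{q,t}(J_\mu)$ in the shifted monomials $M^\circ_\nu$, so the statement is an integrality property of the integral interpolation Macdonald polynomial itself. The plan is to follow Knop's route through the \emph{nonsymmetric} interpolation polynomials, where a recursive construction is available, and only symmetrize at the end. For each composition $\beta$ let $E^*_\beta$ be the unique inhomogeneous polynomial of degree $|\beta|$ that vanishes at the points $\bar\gamma$ (the $(q,t)$-spectral vectors) for all $|\gamma|\le|\beta|$, $\gamma\neq\beta$. We fix it by a normalization $\mathcal{E}^*_\beta$ that mirrors the integral $J$-normalization, namely a product over the cells of the diagram of $\beta$ of factors $1-q^{a+1}t^{l+1}$ built from nonsymmetric arm and leg statistics.

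The recursion uses two operators. The first is Knop's raising operator $\Phi$, which takes $E^*_\beta$ to $E^*_{\Phi\beta}$ with $\Phi\beta=(\beta_2,\dots,\beta_n,\beta_1+1)$. It is given by $f\mapsto (x_n-t^{1-n})\,f(x_n/q,x_1,\dots,x_{n-1})$ and visibly has coefficients in $\QQ[q^{\pm1},t^{\pm1}]$. Since it only multiplies by $x_n-t^{1-n}$ and rescales $x_n$ by $q^{-1}$, tracking degrees in $q$ shows that $q^{-1}$ powers are compensated. The second is the Hecke intertwiner $H_i+\frac{1-t}{1-\bar\beta_{i+1}/\bar\beta_i}$, which swaps $\beta_i<\beta_{i+1}$. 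Its scalar term carries the denominator $1-q^{\beta_{i+1}-\beta_i}t^{k}$. The key step is to verify by induction on $|\beta|$ and on inversions that, after multiplying by the ratio of normalizing factors $\mathcal{E}^*_{s_i\beta}/\mathcal{E}^*_\beta$, this denominator is exactly the new hook factor gained in passing from $\beta$ to $s_i\beta$. With that, the renormalized intertwiner maps Laurent-integral polynomials to Laurent-integral polynomials. This step is the main obstacle: the cancellation has to be checked against the precise nonsymmetric hook bookkeeping. I would first verify it on two-variable examples and then argue cell by cell, since only one hook changes under $s_i$.

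Once each $\mathcal{E}^*_\beta$ is known to be Laurent (indeed in $\QQ[q,t^{\pm1}]$ in the $x$-variables), I obtain $\Js_\mu$ by $t$-symmetrization. Concretely, $\Js_\mu$ is a combination of the $\mathcal{E}^*_\beta$ over the rearrangements $\beta$ of $\mu$, with coefficients that are products of the same intertwiner scalars. These are again Laurent after renormalization, and one checks the overall normalization against $\Js_\mu(q^\mu)=(-1)^{|\mu|}q^{n(\mu')}t^{-2n(\mu)}j_\mu$. Alternatively, the symmetric combination can be identified with $\Js_\mu$ through the vanishing conditions, and the normalization matched by comparing the coefficient of the top monomial, which is $1$ in $J_\mu(Y)$.

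Finally I rewrite the result in the variables $Y_i=x_it^{1-i}$. Passing from $x$-monomials to $Y$-monomials $M^\circ_\nu$ introduces only powers of $t^{\pm1}$. The evaluation points $q^\lambda$ and the operators involve only nonnegative powers of $q$ once the raising operator's $q^{-1}$ rescaling is absorbed. Hence $a_{\mu\nu}\in\QQ[q,t^{\pm1}]\subset\QQ[q^{\pm1},t^{\pm1}]$.
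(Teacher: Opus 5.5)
The paper gives no proof of this statement; it quotes it from Knop. Your outline therefore has to stand on its own, and its decisive step is asserted in a form that is false: the passage from the nonsymmetric $\mathcal{E}^*_\beta$ to $\Js_\mu$.

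\textbf{The symmetrization.} The coefficients of $\Js_\mu$ with respect to Laurent-normalized $E^*_\beta$ are in general \emph{not} Laurent. Take $\mu=(1)$ in two variables.
\begin{itemize}
\item Your $\Phi$ gives $E^*_{(0,1)}=Y_2-t^{-1}$.
\item The intertwiner gives $E^*_{(1,0)}=Y_1+\frac{1-t}{1-qt}Y_2+\mathrm{const}$.
\item On the other hand, $\Js_{(1)}=(1-t)(Y_1+Y_2-1-t^{-1})$.
\end{itemize}
Comparing $Y_1$-coefficients, $\Js_{(1)}$ contains $E^*_{(1,0)}$ with coefficient $1-t$. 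Any multiple $uE^*_{(1,0)}$ with Laurent coefficients has $(1-qt)\mid u$. So the coefficient of that multiple in $\Js_{(1)}$ has the denominator $1-qt$.

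Your fallback does not rescue this, for two reasons.
\begin{itemize}
\item The top coefficient of $\Js_\mu$ is $[m_\mu]J_\mu=c_\mu(q,t)$, not $1$.
\item Symmetrizing a Laurent $\mathcal{E}^*_\beta$ produces a \emph{non-unit} multiple of $\Js_\mu$. For instance, with $(H_1-t)(H_1+1)=0$, one has $(1+H_1)\bigl((1-qt)E^*_{(1,0)}\bigr)=\frac{1-qt^2}{1-t}\,\Js_{(1)}$. Nothing about $\Js_{(1)}$ follows from this.
\end{itemize}

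What is missing is a sharp symmetrization identity together with a matching sharp integrality statement. One such identity is $\Js_\mu=c_\mu\sum_{w}H_wE^*_{\mu^-}$. Here $\mu^-$ is the increasing rearrangement, and $w$ runs over minimal-length representatives of $S_n/\mathrm{Stab}(\mu^-)$, so that no $t$-factorials are divided out. For example, $\Js_{(1)}=(1-t)(1+H_1)E^*_{(0,1)}$. You would then need a proof that $c_\mu E^*_{\mu^-}$ itself is Laurent. Cell-product normalizations cannot deliver this. They attach a factor $1-qt^k$ with $k\ge 1$ to $E^*_{(0,1)}$, which needs no normalization at all, and that factor does not divide $c_{(1)}=1-t$.

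\textbf{The nonsymmetric induction.} As you set it up, it does not close. A cell product $d_\beta=\prod_{s\in\beta}(1-q^{a(s)+1}t^{l(s)+1})$ has the same number of factors, and the same total $q$-degree $\sum_i\binom{\beta_i+1}{2}$, for $\beta$ and for $s_i\beta$. So a swap cannot ``gain'' the factor $1-\bar\beta_{i+1}/\bar\beta_i$. Consequently the renormalized intertwiner $\frac{d_{s_i\beta}}{d_\beta}\bigl(H_i+\frac{1-t}{1-\bar\beta_{i+1}/\bar\beta_i}\bigr)$ does not preserve Laurent polynomials. You have two options:
\begin{itemize}
\item normalize instead by the product of intertwiner denominators accumulated along a chain of $\Phi$'s and swaps (each swap contributes exactly one factor, each $\Phi$ none); or
\item carry extra divisibility in the inductive hypothesis.
\end{itemize}

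\textbf{The refinement to $\QQ[q,t^{\pm1}]$.} This is not a degree count. $E^*_{(0,2)}=q\,\Phi(E^*_{(1,0)})$ contains the term $\frac{q(1-t)}{1-qt}Y_1Y_2$, even though $\beta_1=0$. So in $E^*_{(2,1)}=\Phi(E^*_{(0,2)})$, the substitution $Y_1\mapsto Y_2/q$ is offset by the $q$ inside that coefficient, not by the prefactor $q^{\beta_1}$. This $q$-divisibility of individual coefficients has to be part of what you prove inductively.
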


This does not quite suffice, since the integral Macdonald symmetric functions $J_\mu$ are \emph{not} a basis of $\Lambda_{[q^\pm,t^\pm]}$. We instead conjecture the corresponding statement in the power-sum basis.
\begin{conjecture}\label{conj:laurentbasis}
The coefficient
\[
b_{\mu\nu}:=[P^\circ_\nu]\,\operatorname{sh}_{q,t}(p_\mu) \in \QQ(q,t)
\]
is Laurent, i.e. $b_{\mu\nu} \in \QQ[q^\pm,t^\pm]$. In fact $b_{\mu\nu} \in \QQ[q,t^\pm]$.
\end{conjecture}
For example
\begin{eqnarray*}
 p^*_{21} &=& P^\circ_{21}\\
  && -\tfrac{1}{2}t^{-1}(q+1)(qt-q+t+1)P^\circ_{11}  \\
  &&-\tfrac{1}{2}t^{-1}(t+1)(q-1)(q+1) P^\circ_2  \\
  && + t^{-1}(q-1)(q+1)(qt+1) P^\circ_1.
\end{eqnarray*}

Conjecture~\ref{conj:laurentbasis} is exactly the statement that $b$ has Laurent entries, so by the triangularity argument above it implies Conjecture~\ref{conj:mac-laurent-iso}.

\subsection*{Acknowledgements}

The author is grateful to Per Alexandersson and Houcine Ben Dali for helpful conversations. The author utilized GPT 5.5 (High Thinking) and Claude Opus 4.8 as a sounding board on early drafts of this paper, it provided useful clarifications about the structure of the ring of shifted symmetric functions and Lassalle's shift map. The same models were used to assist with typesetting and editing of this document.
\bibliographystyle{utphys}
\bibliography{/Users/ryanmickler/Dropbox/Kennebunk/Archive/MasterArchive}

\end{document}